 \theoremstyle{plain}
\newtheorem{thm}{Theorem}[subsection]
\theoremstyle{plain}
  \newtheorem{prop}[thm]{Proposition}
\theoremstyle{plain}
\theoremstyle{plain}
 \newtheorem{lemma}[thm]{Lemma}
\theoremstyle{plain}
\theoremstyle{plain}
\newtheorem{cor}[thm]{Corollary}
\theoremstyle{definition}
  \newtheorem{defn}[thm]{Definition}
 \theoremstyle{definition}
\theoremstyle{remark}
\newtheorem{rmk}[thm]{Remark}
\numberwithin{equation}{section}
\newcommand{\Q}{\mathbb{Q}}
\newcommand{\Qp}{\mathbb{Q}_p}
\newcommand{\F}{\mathbb{F}}
\newcommand{\fM}{\mathfrak{M}}
\newcommand{\fS}{\mathfrak{S}}
\newcommand{\fm}{\mathfrak{m}}
\newcommand{\cE}{\mathcal{E}}
\newcommand{\cM}{\mathcal{M}}
\newcommand{\cO}{\mathcal{O}}
\newcommand{\phz}{\varphi}
\newcommand{\Gal}{\mathrm{Gal}}
\newcommand{\Ind}{\mathrm{Ind}}
\newcommand{\GL}{\mathrm{GL}}
\DeclareMathOperator{\et}{\acute et}
\DeclareMathOperator{\wa}{wa}
\DeclareMathOperator{\Mod}{Mod}
\DeclareMathOperator{\Fil}{Fil}
\DeclareMathOperator{\cris}{cris}
\newcommand{\ra}{\rightarrow}
\newcommand{\iarrow}{\hookrightarrow}
\newenvironment{smallpmatrix}
  {\left(\begin{smallmatrix}}
  {\end{smallmatrix}\right)}
\title[Reductions of some crystalline representations]{Reductions of some two-dimensional crystalline representations via Kisin modules}
\author{John Bergdall}
\address{Bryn Mawr College,
Department of Mathematics,
101 North Merion Avenue,
Bryn Mawr, PA 19010, USA}
\email{jbergdall@brynmawr.edu}
\author{Brandon Levin}
\address{Department of Mathematics,
University of Arizona, 
617 N Santa Rita Avenue, 
Tucson, Arizona 85721, USA}
\email{bwlevin@math.arizona.edu}
\subjclass[2000]{11F80 (11F85)}
\begin{document}
\begin{abstract}
We determine rational Kisin modules associated with two-dimensional, irreducible, crystalline representations of $\Gal(\overline{\Q}_p/\Q_p)$ of Hodge-Tate weights $0, k-1$. If the slope is larger than $\lfloor \frac{k-1}{p} \rfloor$, we further identify an integral Kisin module, which we use to calculate the semisimple reduction of the Galois representation. In that range, we find that the reduction is constant, thereby improving on a theorem of Berger, Li, and Zhu.
\end{abstract}
\maketitle
\setcounter{tocdepth}{1}
\tableofcontents

\section{Introduction}
Let $p$ be a prime number and $\overline{\mathbb Q}_p$ be an algebraic closure of the $p$-adic numbers $\mathbb{Q}_p$. The aim of this paper is to study two-dimensional, irreducible, crystalline representations of $G_{\mathbb Q_p} = \Gal(\overline{\mathbb Q}_p/\mathbb Q_p)$ and their reductions modulo $p$.  Examples of such representations arise in the arithmetic of modular forms. Fontaine first calculated the corresponding reductions in the late 1970's for modular forms whose weights are small with respect to $p$. (The proof was never published; Edixhoven provided a proof in \cite{Edixhoven-Weights}.) Spurred on by the $p$-adic local Langlands correspondence for $\mathrm{GL}_2(\mathbb Q_p)$ there has recently been considerable attention paid to local questions, often without qualification on weights.

\subsection{Main result}

To make our discussion precise, write $v_p$ for the valuation on $\overline{\mathbb Q}_p$ normalized by $v_p(p) = 1$. Then, for each $k \geq 2$ and each $a_p \in \overline{\mathbb Q}_p$ satisfying $v_p(a_p) > 0$, there exists a unique two-dimensional, irreducible, crystalline representation $V_{k,a_p}$ whose Hodge--Tate weights are $0$ and $k-1$ and such that the characteristic polynomial of the crystalline Frobenius is $X^2 - a_p X + p^{k-1}$. Up to one-dimensional twists, these are all the two-dimensional, irreducible, crystalline representations of $G_{\mathbb Q_p}$. So, calculating the reductions in general reduces to the two-parameter family $V_{k,a_p}$. 

Let $\overline{V}_{k,a_p}$ be the semisimple reduction modulo $p$ of $V_{k,a_p}$. For $k$ fixed, it is known that $a_p \mapsto \overline{V}_{k,a_p}$ is locally constant (see \cite{Berger-LocalConstancy}, for example). So, focusing near to $a_p = 0$, there exists a smallest real number $\delta_p(k)$ for which $\overline{V}_{k,a_p}\cong \overline{V}_{k,0}$ whenever $v_p(a_p) > \delta_p(k)$. In terms of controlling $\delta_p(k)$,  Berger, Li, and Zhu proved fifteen years ago that $\delta_p(k) \leq \lfloor{k-2\over p-1}\rfloor$ (\cite{BergerLiZhu-SmallSlopes}). Our main theorem improves that result:
\begin{thm}[{Corollary \ref{cor:reduction}}]\label{thm:intro} Let $k \geq 2$. Then, $\overline{V}_{k,a_p} \cong \overline{V}_{k,0}$ for all $v_p(a_p) > \lfloor \frac{k-1}{p} \rfloor$.
\end{thm}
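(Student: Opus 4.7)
The strategy is to combine the two main constructions promised in the abstract: the explicit rational Kisin module attached to $V_{k,a_p}$, and the identification of an integral Kisin module in the slope range $v_p(a_p) > \lfloor (k-1)/p \rfloor$. Once an integral Kisin module is available, the semisimple mod-$p$ reduction of $V_{k,a_p}$ can be read off by reducing its Frobenius matrix modulo $p$ and applying Fontaine's theory of \'etale $\phi$-modules.

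First, I would write down the weakly admissible filtered $\phi$-module $D_{k,a_p}$ attached to $V_{k,a_p}$ (rank $2$, basis $e_1, e_2$, crystalline Frobenius with characteristic polynomial $X^2 - a_p X + p^{k-1}$, filtration jumping in Hodge--Tate degrees $0$ and $k-1$) and pass through Kisin's functor to obtain the rational Kisin module $\fM[1/p]$ over $\fS[1/p]$. Its Frobenius matrix is an explicit expression in $a_p$, $u$, and $E(u) = u + p$, whose entries a priori involve negative powers of $a_p$ coming from inverting the crystalline Frobenius.

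The crucial step is then to exhibit an $\fS$-lattice $\fM \subset \fM[1/p]$ with integral Frobenius. The heuristic is that Frobenius inflates $u^n$ to $u^{np}$, so one can trade up to $\lfloor (k-1)/p \rfloor$ such inflations against powers of $1/a_p$ before the resulting $u$-adic denominators overflow the bound $k-1$ imposed by the Hodge--Tate weights. The hypothesis $v_p(a_p) > \lfloor (k-1)/p \rfloor$ is precisely the threshold at which a suitable change of basis clears every $p$-adic denominator. I expect this step to be the main technical obstacle: it requires a careful inductive bookkeeping of both $u$-adic and $p$-adic valuations of entries of the rational Kisin matrix, and explains the particular form of the bound in the statement.

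Finally, once $\fM$ is integral, reducing modulo $p$ yields a $\phi$-module over $\F_p[[u]]$ whose associated \'etale $\phi$-module over $\F_p((u))$ determines $\overline{V}_{k,a_p}$. Under the slope hypothesis, every $a_p$-dependent entry of the Frobenius matrix of $\fM$ acquires strictly positive $p$-adic valuation, so the mod-$p$ reduction is independent of $a_p$ throughout the range $v_p(a_p) > \lfloor (k-1)/p \rfloor$. Comparing with the boundary case $a_p = 0$ --- where $V_{k,0}$ is an induction of an unramified character of $G_{\Q_{p^2}}$ with well-known irreducible mod-$p$ reduction --- the common reduction must coincide with $\overline{V}_{k,0}$, yielding the claimed isomorphism $\overline{V}_{k,a_p} \cong \overline{V}_{k,0}$.
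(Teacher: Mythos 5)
Your high-level template — rational Kisin module, then integral lattice, then reduce mod $p$ — matches the paper's, but the proposal has a real gap in its first step, and that gap is precisely what the paper is designed to overcome.

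You propose to ``pass through Kisin's functor'' from $D_{k,a_p}$ to obtain an explicit Frobenius matrix for the rational Kisin module. This functor (the inverse of $D(-)$ in the paper's notation) is \emph{not} constructive in the way you are imagining: Kisin's equivalence between $\Mod_{\cO_F}^{\varphi,N_\nabla,\leq h}$ and effective filtered $\varphi$-modules gives no recipe for writing down a Frobenius matrix, and the paper says explicitly that this passage ``is difficult to navigate from the point of view of direct calculation, except in very special circumstances.'' Your guess that the entries ``a priori involve negative powers of $a_p$'' also does not match reality: the Frobenius matrix of $\cM_{k,a_p}$ (in the right basis) turns out to be $\left(\begin{smallmatrix} a_p\zeta & -1 \\ E^{k-1} & 0\end{smallmatrix}\right)$ with $\zeta = (\lambda_-/\lambda_{++})^{k-1}$, a power series with no $a_p^{-1}$ terms anywhere. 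Consequently your heuristic that the bound $\lfloor (k-1)/p\rfloor$ arises from ``trading Frobenius inflations $u^n\mapsto u^{np}$ against powers of $1/a_p$'' is not the actual mechanism; the bound comes from the valuations of the binomial coefficients in $T_{\leq k-1}\bigl(a_p(1+u^p/p)^{k-1}\bigr)$, which lands in $\fm_F[u]$ exactly when $v_p(a_p) > \lfloor (k-1)/p\rfloor$.

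The paper's route around the non-constructive functor is the key missing idea. One computes the Kisin module and its monodromy operator $N_{\nabla,0}$ directly only at the boundary point $a_p=0$ (where it is accessible), then solves the monodromy equation $N_\nabla(C) + BC = p(E/c_0)C\varphi(B)$ with $B$ held fixed to find a one-parameter family $C_{a_p}$ of Frobenius matrices over the annulus $|u|\leq p^{-1/p}$. One then runs a ``row-reduction'' descent algorithm to produce a $\varphi$-module over $\fS_F$ (and, when $v_p(a_p) > \lfloor (k-1)/p\rfloor$, over $\fS_\Lambda$), and the crucial observation is that any such descent \emph{automatically} satisfies the monodromy condition — since the condition only needs to be checked at $u=-p$, which lies in the small disc where $N_\nabla$ is already known to be regular. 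Only then does one verify $D(\cM) \cong D_{k,a_p}$ by computing the filtration. Your proposal never invokes the monodromy condition at all, which is both what makes Kisin's theory applicable and what generates the explicit family in the first place. The final reduction step you sketch is correct and matches Corollary \ref{cor:reduction}, but as written the argument cannot get off the ground because the explicit rational Kisin module is assumed rather than produced.
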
 

This theorem advances our understanding of $\overline{V}_{k,a_p}$ when $v_p(a_p) \gg 0$. It complements many papers focusing on small $v_p(a_p)$ (\cite{BuzzardGee-SmallSlope, BuzzardGee-SmallSlope2, BhattacharyaGhate-Slope12, BhattacharyaGhateRozensztajn-Slope1, Arsovski-Slopes, NagelPande-Slopes23, GhateRai-Slopes3/2}).  Those works employ a strategy, pioneered by Buzzard and Gee, that leverages the $p$-adic local Langlands correspondence. By contrast, the earlier work of Berger--Li--Zhu uses Wach modules, which more directly determine lattices in crystalline Galois representations. Our approach belongs to that tradition, though we replace Wach modules with another tool from integral $p$-adic Hodge theory:\ Kisin modules.

Despite their theoretical importance, there are few examples of explicit calculations with Kisin modules like we give here. Those that do exist are recent and limited to small Hodge--Tate weights (\cite{CarusoDavidMezard-Calculation, LLLM-ShapesShadows, LLLM-WeightElimination}). One advantage of Kisin modules is their availability beyond two-dimensional representations of $G_{\Q_p}$, unlike approaches via $p$-adic local Langlands  (see the generalizations of \cite{BergerLiZhu-SmallSlopes} in \cite{Dousmanis-Reductions,YamashitaYasuda-Applications}), and their availability beyond crystalline situations, unlike Wach modules (cf.\ \cite{CarusoLubicz-Reductions}). For instance, the method outlined below was recently applied by the authors and Tong Liu in order to calculate reductions of some semi-stable, non-crystalline, representations of $G_{\Q_p}$ (\cite{BergdallLevinLiu-Semistable}).

Finally, Theorem \ref{thm:intro} can be improved. Computational evidence (\cite{Rozensztajn-2dlocus}) and global considerations (\cite{Gouvea-WhereSlopesAre,BuzzardGee-Slopes}) suggest that $\delta_p(k) \leq \lfloor{k-1\over p+1}\rfloor$, though precise predictions of local constancy phenomena related to Galois representations and modular forms have been wrong before (cf.\ \cite{BuzzardCalegari-GouveaMazur}). After the release of this article, Arsovski (\cite{Arsovski-Supersinglarities}) provided further evidence that $\delta_p(k) \leq \lfloor{k-1\over p+1}\rfloor$ by showing  $\delta_p(k) \leq \lfloor{{k-1\over p+1}\rfloor} + \lfloor{\log_p(k)\rfloor}$ as long as $p > 3$ and $k \not\equiv 1 \bmod p+1$. Arsovski uses a $p$-adic local Langlands approach, so they do not recover neither the more specific Theorem \ref{thm:explicit-descent-intro} below nor Theorem \ref{thm:descent2}, which applies to any $a_p$.

\subsection{Method}
The rest of the introduction is devoted to summarizing our method. We write $F$ for a finite extension of $\Qp$, $\Lambda$ for its ring of integers, and $\fm_F$ for the maximal ideal of $\Lambda$. The field $F$ will play the role of linear coefficients. Write $E(u) = u+p$. Define $\cO_F \subset F[\![u]\!]$ to be the subring of series converging on the disc $|u|_p<1$. We will consider $\varphi$-modules over $\mathcal O_F$ and $\fS_\Lambda = \Lambda[\![u]\!]$.  A finite height $\phz$-module over $\cO_F$ is a finite free $\cO_F$-module $\cM$ equipped with an operator $\varphi: \cM \rightarrow \cM$, called a Frobenius, that is semi-linear for $u \mapsto u^p$ on $\cO_F$ and for which the cokernel of the  linearization $\varphi^{\ast}\mathcal M \rightarrow \mathcal M$ is annihilated by $E^h$, for some non-negative integer. (We say $\mathcal M$ has height $\leq h$).  A Kisin module is a $\phz$-module over $\fS_{\Lambda}$ satisfying the same height condition.  We regularly describe a $\phz$-module (or Kisin module) by fixing a basis $\{e_i\}$ of $\mathcal M$ and giving the matrix $C$ of $\varphi$ in that basis.

Now let $k \geq 2$ and $a_p \in \fm_F$. By \cite{Kisin-FCrystals}, one may associate to  $V_{k,a_p}$ a unique $\phz$-module $\cM_{k,a_p}$ over $\mathcal O_F$ with height $\leq k-1$. More precisely, $\cM_{k, a_p}$ is constructed from the (contravariant) weakly-admissible filtered $\varphi$-module $D_{\cris}^{\ast}(V_{k,a_p})$. By the general theory, one may descend $\cM_{k,a_p}$ to a Kisin module $\fM_{k,a_p}$ and, though $\fM_{k,a_p}$ depends on a Galois stable lattice in $V_{k, a_p}$, the mod $p$ Galois representation $\overline V_{k,a_p}$ is completely determined by the $\varphi$-module $\fM_{k,a_p}/\fm_F\fM_{k,a_p}$. In this way, Kisin modules provide a theoretical tool for calculating $\overline{V}_{k,a_p}$. Unfortunately, both the passage from filtered $\varphi$-modules to finite height $\phz$-modules over $\cO_F$ and the descent to $\fS_\Lambda$ are difficult to navigate from the point of view of direct calculation, except in {\em very} special circumstances.

Suppose, however, that we have defined a rank two Kisin module $\fM$ and we want to argue it is one of the $\fM_{k,a_p}$. Consider, first, any finite height $\phz$-module $\cM$ over $\cO_F$. It is canonically equipped with a meromorphic differential operator $N_{\nabla}$  satisfying the relation
\begin{equation*}
N_{\nabla} \circ \varphi = p {E(u)\over E(0)} \varphi \circ N_{\nabla}.
\end{equation*}
We say $\cM$ satisfies the {\em monodromy condition} provided $N_{\nabla}$ is without poles, which is equivalent to $N_{\nabla}$ being without a pole at $u = -p$ it turns out. In \cite{Kisin-FCrystals}, an equivalence $\cM \leftrightarrow D(\cM)$ is constructed between finite height $\phz$-modules over $\cO_F$ that satisfy the monodromy condition and effective filtered $\varphi$-modules. Returning to $\fM$, if $\cM=\fM\otimes_{\fS_\Lambda} \cO_F$ satisfies the monodromy condition (we abuse language and say $\fM$ itself satisfies the monodromy condition), then  $D(\cM)$ is  weakly-admissible. In practice, it is easy to determine if $D(\cM) = D_{\cris}^{\ast}(V_{k,a_p})$, and thus to calculate $\overline{V}_{k,a_p}$ from $\fM$. For $v_p(a_p) > \lfloor{{k-1\over p}\rfloor}$, this strategy can be enacted. We prove the following theorem.

\begin{thm}[Proposition \ref{prop:furtherdescent}]\label{thm:explicit-descent-intro}
Let $k\geq 2$ and suppose $v_p(a_p) > \lfloor{{k-1\over p}\rfloor}$ and $k \geq 2 p +1$. Then, there exists a polynomial $P \in \fm_F[u]$ of degree at most $k-1$ with $P(0) = a_p$ such that $\fM = \fS_{\Lambda}^{\oplus 2}$ equipped with $\varphi = \begin{smallpmatrix} P & -1 \\ E^{k-1} & 0 \end{smallpmatrix}$ satisfies the monodromy condition and $\fM \otimes_{\fS_{\Lambda}} \cO_F \cong \cM_{k,a_p}$.
\end{thm}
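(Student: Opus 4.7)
The plan is to construct $P$ so that two conditions hold simultaneously on the candidate $\varphi$-module $\cM_P := \cO_F^{\oplus 2}$ with Frobenius $C = \begin{smallpmatrix} P & -1 \\ E^{k-1} & 0 \end{smallpmatrix}$: first, that $N_\nabla$ has no pole at $u = -p$, and second, that the filtered $\varphi$-module $D(\cM_P)$ from Kisin's functor matches $D_{\cris}^{\ast}(V_{k,a_p})$. Given both, the equivalence recalled in the excerpt identifies $\cM_P$ with $\cM_{k,a_p}$.

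I would first analyse $D(\cM_P) = \cM_P/u\cM_P$. Its Frobenius matrix is $C(0) = \begin{smallpmatrix} a_p & -1 \\ p^{k-1} & 0 \end{smallpmatrix}$, so the characteristic polynomial $X^2 - a_p X + p^{k-1}$ is built in from $P(0) = a_p$ and the shape of $C$. Kisin's construction of the Hodge filtration on $D(\cM_P)$ factors through $\cM_P/E\cM_P$, so the resulting Hodge line depends on $P$ only through its reduction modulo $E(u)$, that is, through $P(-p)$. Since $V_{k,a_p}$ is irreducible, its weakly-admissible Hodge line is the unique Frobenius-non-stable line in $D(\cM_P)$; matching the two lines produces a single linear equation on $P(-p)$.

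Next, I would compute $N_\nabla$ on $\cM_P$ using the defining relation $N_\nabla \varphi = p \tfrac{E(u)}{E(0)} \varphi N_\nabla$, solving recursively for the entries in the basis $\{e_1,e_2\}$. A priori the matrix of $N_\nabla$ lies in $F[\![u]\!][1/u]$ with possible poles only at zeros of $\varphi^n(E)$; by the observation in the introduction it suffices to check regularity at $u = -p$. Expanding the Laurent series of the matrix entries of $N_\nabla$ at $u = -p$ and demanding that the principal parts vanish produces a finite list of linear equations on the coefficients of $P$, roughly one per pole order up to $\lfloor(k-1)/p\rfloor$.

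The main obstacle is to solve these filtration and monodromy equations simultaneously by a polynomial of degree at most $k-1$, with constant term $a_p$ and all other coefficients in $\fm_F$. I would arrange the argument as a successive approximation in the $(u+p)$-adic topology: start from $P_0 = a_p$ and correct one pole order at a time, always leaving the constant term fixed. The slope hypothesis $v_p(a_p) > \lfloor(k-1)/p\rfloor$ should be precisely what controls the $p$-adic denominators appearing in the successive corrections and keeps every coefficient of $P$ inside $\fm_F$; the bound $k \geq 2p+1$ guarantees that there are enough polynomial slots (roughly $k-1 - \lfloor(k-1)/p\rfloor$) to absorb all corrections while respecting $\deg P \leq k-1$. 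Once $P$ is produced, monodromy and the filtration match, so Kisin's equivalence gives $\fM \otimes_{\fS_\Lambda} \cO_F \cong \cM_{k,a_p}$.
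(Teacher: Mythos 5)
Your plan shares the paper's surface goal — produce a polynomial $P$ so that the Frobenius matrix $C = \begin{smallpmatrix} P & -1 \\ E^{k-1} & 0 \end{smallpmatrix}$ yields a pole-free $N_\nabla$ at $u=-p$ — but the route you sketch has gaps in exactly the places where the real work lies. First, a small misconception: there is no filtration constraint on $P$. For any polynomial $P$ with $P(0)=a_p$, the image of the linearization $\varphi^*\cM_P \to \cM_P$ is $\cO_F e_1 \oplus \cO_F E^{k-1} e_2$ regardless of $P$, so the Hodge filtration on $D(\cM_P)$ automatically jumps in degrees $0$ and $k-1$, and the Hodge line is automatically not $\varphi$-stable because $\varphi(e_2) = -e_1$. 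This is precisely what Theorem \ref{thm:descent-monodromy} verifies; your ``single linear equation on $P(-p)$'' does not exist.

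The more serious gap is the proposed successive approximation in $(u+p)$-adic order. The matrix $B$ of $N_\nabla^{\cM_P}$ solves the Frobenius-twisted fixed-point equation $B = -N_\nabla(C)C^{-1} + p(E/c_0)\,C\varphi(B)C^{-1}$, and the pole of $B$ at $u=-p$ comes from $C^{-1}$, which has a pole there of order $k-1$; so a priori the pole has order up to $k-2$, not $\lfloor(k-1)/p\rfloor$. More importantly, perturbing $P$ in high $(u+p)$-adic order perturbs $\varphi(B)$, and hence all of $B$, through the recursion; you have given no mechanism guaranteeing that this iteration converges, that the resulting corrections live in $\fm_F[u]$, or that they can all be absorbed in degree $\leq k-1$. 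These are exactly the hard points. The paper avoids them entirely: it exhibits an explicit one-parameter family $C_{a_p}$ over $R = \cO_{F,[0,p^{-1/p}]}$ satisfying the monodromy relation with the same explicit, pole-free $B$ as the $a_p=0$ module (Propositions \ref{prop:diagonal} and \ref{prop:familyoverR}), so that the monodromy condition is \emph{automatic} for any descent to $\cO_F$ (Corollary \ref{cor:monoverR}). The remaining content — the content you would need to supply — is the row-reduction descent algorithm of Section \ref{sec:descent-algorithm} and its quantitative estimates, which produce $P$ explicitly $p$-adically close to $T_{\leq k-1}\bigl(a_p(\lambda_-/\lambda_{++})^{k-1}\bigr)$; integrality of $P$ is then a concrete computation with that truncation (Lemma \ref{lem:L1}), and the bound $k\geq 2p+1$ enters through the choice $a' = (k-1)/2 - (p-1)/2$ in Theorem \ref{thm:descent2}, not through a count of ``polynomial slots.''
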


Theorem \ref{thm:intro} follows in weights $k \geq 2p+1$ since $\fM/\fm_F\fM$ is independent of $a_p$ (the theorem is known in small weights by prior work). We stress the content of Theorem \ref{thm:explicit-descent-intro} is entirely contained in finding an $\fM$ that satisfies the monodromy condition. The  polynomial $P$ in Theorem \ref{thm:explicit-descent-intro} is $p$-adically near to the truncation of $a_p(1+u^p/p)^{k-1}$ to degree $k-1$, which we note lies in $\mathfrak m_F[u]$ when $v_p(a_p) > \lfloor{{k-1\over p}\rfloor}$.

We end by describing the conceptual part of the strategy used to prove Theorem \ref{thm:explicit-descent-intro}. Since we first prove a more general statement for any $v_p(a_p) > 0$, we will ignore the issues of integrality and work over $\cO_F$. First, we determine the $\phz$-module $\cM_{k,0}$ corresponding to $a_p = 0$. This is one case where calculating using the definitions in \cite{Kisin-FCrystals} is accessible. In Section \ref{sec:family-2d}, we give a trivialization $\cM_{k,0} = \cO_F^{\oplus 2}$ in which $\varphi=\begin{smallpmatrix} 0 & -1 \\ E^{k-1} & 0 \end{smallpmatrix}$ and the monodromy operator $N_{\nabla,0}$, which has no poles, is completely explicit. Considering all operators $\varphi: F[\![u]\!]^{\oplus 2} \rightarrow F[\![u]\!]^{\oplus 2}$ satisfying 
$$
\varphi \circ N_{\nabla,0} = p{E(u)\over E(0)} N_{\nabla,0} \circ \varphi,
$$
there is a one-parameter family $\{\varphi_{a_p}\}$  with the simple form $\varphi_{a_p} = \begin{smallpmatrix} a_p \zeta & -1 \\ E^{k-1} & 0 \end{smallpmatrix}$ where $\zeta \in 1 + uF[\![u]\!]$ is an explicit series lying in the ring $R$ of functions on the closed disc $|u|_p \leq p^{-1/p}$. Via $\varphi_{a_p}$, we consider $R^{\oplus 2}$ as a $\varphi$-module $\widetilde{\cM}_{k,a_p}$ over $R$ with height $\leq k-1$, and we prove that we can descend $\widetilde{\cM}_{k,a_p}$ to a $\phz$-module $\cM$ over $\cO_F$, with features (except integrality) as in Theorem \ref{thm:explicit-descent-intro}. The crucial observation at this point is that such an $\cM$ {\em must} satisfy the monodromy condition:\ the canonical operator $N_{\nabla,\cM}$ associated with $\cM$ agrees with $N_{\nabla,0}$ after base change from $\cO_F$ to $R$ and so $N_{\nabla,\cM}$ has no pole at $u=-p$. After a short calculation, we conclude  $\cM \cong \cM_{k,a_p}$.

A significant portion of this article is devoted to an algorithm, and the attendant $p$-adic analysis, providing the descent from $R$ to $\cO_F$ described in the previous paragraph. The main mechanism is ``row reduction'' for semilinear operators. Related processes can be found in \cite{CarusoDavidMezard-Calculation, LLLM-ShapesShadows}, though those works focus on some more general aspects while simultaneously restricting to the small weight situations. 

\subsection{Acknowledgements} We would like to thank Laurent Berger and Tong Liu for helpful conversations related to this project.  The first author was partially supported by NSF award DMS-1402005. The second author was supported by a grant from the Simons Foundation/SFARI (\#585753).

\section{Kisin modules and the monodromy condition} \label{sec:kisinmodules}

For this section, we allow $K/\mathbb{Q}_p$ to be a general finite extension and work in any dimension; we will restrict to $K = \mathbb{Q}_p$ and dimension two starting in Section \ref{sec:family-2d}.   Here, we establish notations and the main theoretical $p$-adic Hodge theory results we need on Kisin modules and the monodromy condition.  The key result is a criterion (Corollary \ref{cor:monoverR}) for a $\phz$-module to satisfy the monodromy condition (it is based on \cite[Proposition 5.3]{LLLM-ShapesShadows}).

\subsection{Background}\label{subsec:background}

Let $k$ be a finite field, $W(k)$ the ring of Witt vectors over $k$ and $K_0 = W(k)[1/p]$. Choose a finite, totally ramified, extension $K/K_0$ and let $\overline{K}$ be an algebraic closure of $K$. Define $G_K = \Gal(\overline K / K)$. Write $K = K_0(\pi)$ where $\pi$ is a uniformizer in $K$, and let $E(u) \in K_0[u]$ be the Eisenstein polynomial for $\pi$. Choose elements $\pi_0,\pi_1,\pi_2,\dotsc$ in $\overline{K}$ such that $\pi_0=\pi$ and $\pi_{n+1}^p = \pi_n$ for all $n \geq 0$. The field $K_\infty$ is defined to be the compositum of the $K(\pi_n)$ in $\overline{K}$, and $G_\infty$ is defined to be $\Gal(\overline K/K_\infty)$. 

For $r>0$, we write $\Delta_{[0,p^{-r}]}$ for the $p$-adic disc of radius $p^{-r}$ over $K_0$ in a coordinate $u$ and $\Delta = \bigcup_{r} \Delta_{[0,p^{-r}]}$ for the open $p$-adic unit disc over $K_0$. The ring of rigid analytic functions on $\Delta_{[0,p^{-r}]}$ is denoted by $\mathcal O_{[0,p^{-r}]}$ and, likewise, $\mathcal O \subseteq K_0[\![u]\!]$ denotes the ring of rigid analytic functions on $\Delta$. We write $\fS = W(k)[\![u]\!]$, which is a subring of $\mathcal O_{[0,p^{-r}]}$ for any $r >0$. The ring $K_0[\![u]\!]$ is equipped with a unique operator $\varphi$ such that $\varphi(u) = u^p$ and $\varphi$ acts as a lift of Frobenius on $K_0$. The rings $\mathcal O$ and $\mathcal O_{[0,p^{-r}]}$ are $\varphi$-stable. In fact, $\varphi(\mathcal O_{[0,p^{-r}]}) \subseteq \mathcal O_{[0,p^{-r/p}]} \subseteq \mathcal O_{[0,p^{-r}]}$.

We also choose $F/\Qp$ a finite extension, which will play the role of linear coefficients. We assume that $F$ contains a subfield isomorphic to $K_0$. We write $\Lambda$ for the ring of integers in $F$ and $\F$ for the residue field. The notations of the previous paragraph extend, naturally. Specifically, $\mathcal O_{F,[0,p^{-r}]} = \mathcal O_{[0,p^{-r}]} \otimes_{\mathbb Q_p} F$ and $\mathcal O_F = \mathcal O \otimes_{\mathbb Q_p} F$, which is the ring of rigid analytic functions on $[K_0:\Qp]$-many open unit discs over $F$. Likewise, we define $\fS_{\Lambda} = \fS\otimes_{\mathbb Z_p} \Lambda$ and $\fS_F = \fS_{\Lambda}[1/p] \subseteq \mathcal O_F$. The action of $\varphi$ on $K_0[\![u]\!]$ extends to $(K_0\otimes_{\mathbb Q_p} F)[\![u]\!]$ linearly in $F$ and all the above rings are $\varphi$-stable.
 
Assume that $R \subseteq (K_0\otimes_{\mathbb Q_p} F)[\![u]\!]$ is a $\varphi$-stable subring containing $E$. A $\varphi$-module over $R$ is a finite free $R$-module $M$ equipped with an injective $\varphi$-semilinear operator $\varphi_M : M \rightarrow M$. We write $\Mod_{R}^{\varphi}$ for the category whose objects are $\varphi$-modules over $R$ and with morphisms being $R$-module morphisms that commute with $\varphi$. If $h \geq 0$, then an element $M \in \Mod_R^{\varphi}$ is said to have ($E$)-height $\leq h$ if the linearization $\varphi_M^{\ast}(M) = R\otimes_{\varphi,R} M \rightarrow M$ of $\varphi_M$ has cokernel annihilated by $E^h$. We write $\Mod_{R}^{\varphi,\leq h}\subseteq \Mod_{R}^{\varphi}$ for the full subcategory of $\varphi$-modules with height $\leq h$.

\begin{defn}
A \emph{Kisin module of height $\leq h$} over $\fS_{\Lambda}$ (resp.\ $\fS_{F}$) is an object in $\Mod_{\fS_{\Lambda}}^{\varphi,\leq h}$ (resp.\ $\Mod_{\fS_{F}}^{\varphi,\leq h}$).
\end{defn}

Though our ultimate aim is questions on crystalline Galois representations, for now we work with possibly non-trivial monodromy. Following \cite{Kisin-FCrystals}, let $\Mod_{\mathcal O_F}^{\varphi, N, \leq h}$ denote the category of triples $(\cM, \phz_{\cM}, N_{\cM})$ where $\cM \in \Mod_{\mathcal O_F}^{\varphi,\leq h}$ with Frobenius operator $\varphi_{\cM}$ and $N_{\cM}:\cM/u\cM \ra \cM /u\cM$ is a $K_0 \otimes_{\Qp} F$-linear endomorphism such that $N_{\cM} \phz_{\cM}|_{u=0} = p \phz_{\cM}|_{u=0} N_{\cM}$. Here and below $(-)|_{u=0}$ means to calculate modulo $u$. Similarly, we define $\Mod_{\fS_{\Lambda}}^{\varphi, N, \leq h}$ (resp.\ $\Mod_{\fS_{F}}^{\varphi, N, \leq h}$) as in \cite[(1.3.12)]{Kisin-FCrystals}. Note:\ even if $\fM$ is defined over $\fS_{\Lambda}$, we nevertheless take $N_{\fM}$ to be defined on $(\fM/u\fM) \otimes_{\Lambda} F$. Extension of scalars defines functors
\begin{equation}\label{eqn:functor-composition}
\Mod_{\fS_\Lambda}^{\varphi, N,\leq h} \rightarrow  \Mod_{\fS_F}^{\varphi, N,\leq h} \rightarrow \Mod_{\cO_F}^{\varphi, N, \leq h}.
\end{equation}
Below we will just write $\cM \in \Mod_{\cO_F}^{\varphi,N,\leq h}$ with the operators $\varphi_{\cM}$ and $N_{\cM}$ understood.

Let $\mathrm{MF}^{\varphi, N}_F$ denote the category of filtered $(\varphi, N)$-modules over $F$ (see \cite[Section 3.1.1]{BreuilMezard-Multiplicities}). Then, Kisin defined in \cite[(1.2.7-8)]{Kisin-FCrystals} a covariant functor $D : \Mod^{\phz, N, \leq h}_{\cO_F} \rightarrow \mathrm{MF}^{\varphi, N}_F$. The underlying vector space is $D(\cM) = \cM/u\cM$, the Frobenius on $D(\cM)$ is $\varphi_{\cM}|_{u=0}$, and the monodromy on $D(\cM)$ is $N_{\cM}$. The filtration, which is always effective and does not depend on $N_{\cM}$, is more involved. We will recall its definition in the proof of Corollary \ref{thm:descent-monodromy}. We also abuse notation and write $D$ for the composition of $D$ with any of the scalar extensions \eqref{eqn:functor-composition}.

\subsection{The monodromy condition}
We now discuss the monodromy condition, which cuts out a subcategory $\Mod_{\mathcal O_F}^{\varphi,N_{\nabla},\leq h} \subseteq \Mod_{\mathcal O_F}^{\varphi, N, \leq h}$ that is equivalent via $D(-)$ to the effective filtered $\varphi$-modules (\cite[Theorem 1.2.5]{Kisin-FCrystals}). Let $c_0 = E(0)$ and
\[
\lambda = \prod_{n  = 0}^{\infty} \phz^n(E/c_0) \in \cO_{F}.  
\]   
Define a derivation $N_{\nabla} = - u \lambda \frac{d}{du}$ on $\cO_{F}$. Recall that  $N_{\nabla} \phz = p(E/c_0)\phz N_{\nabla}$.

\begin{lemma}\label{lemma:mono-uniqueness}
Let $\mathcal M \in \Mod_{\cO_F}^{\varphi, N, \leq h}$. Assume that $\mathcal O_F[1/\lambda] \subseteq S \subseteq (K_0\otimes_{\Q_p} F)[\![u]\!]$ is stable under $\varphi$ and $N_{\nabla}$. Write $\mathcal M_S = \mathcal M \otimes_{\mathcal O_F} S$. Then, there exists a unique differential operator $N_{\nabla}^{\cM} : \cM_S \rightarrow \cM_S$ over $N_{\nabla}$ such that $N_{\nabla}^{\cM}|_{u=0}= N_{\cM}$ and  $N_{\nabla}^{\cM} \phz_{\cM} = p(E/c_0) \phz_{\cM} N^{\cM}_{\nabla}$.
\end{lemma}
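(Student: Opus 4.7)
The plan is to trivialize $\cM$ by an $\cO_F$-basis, turn both existence and uniqueness into a matrix fixed-point equation, and solve it by $u$-adic iteration.

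First I would fix an $\cO_F$-basis $\{e_1,\dots,e_d\}$ of $\cM$ and let $C\in\Mat_d(\cO_F)$ be the matrix of $\varphi_{\cM}$. The height condition says $C$ is invertible over $\cO_F[1/E]$, and the identity $\lambda=(E/c_0)\varphi(\lambda)$ exhibits $E$ as a factor of $\lambda$, so $\cO_F[1/E]\subseteq\cO_F[1/\lambda]\subseteq S$ and $C^{-1}\in\Mat_d(S)$. Any derivation $N_{\nabla}^{\cM}$ of $\cM_S$ over $N_{\nabla}$ is determined by the matrix $A\in\Mat_d(S)$ defined by $N_{\nabla}^{\cM}(e_j)=\sum_i A_{ij} e_i$, and the intertwining $N_{\nabla}^{\cM}\varphi_{\cM}=p(E/c_0)\varphi_{\cM}N_{\nabla}^{\cM}$, combined with $N_{\nabla}\varphi=p(E/c_0)\varphi N_{\nabla}$ on $S$, becomes the fixed-point equation
\[
A \;=\; T(A) \;:=\; p(E/c_0)\,C\,\varphi(A)\,C^{-1} \;-\; N_{\nabla}(C)\,C^{-1}.
\]

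Uniqueness follows by noting that $D:=A-A'$ for two solutions with $A|_{u=0}=A'|_{u=0}=N_{\cM}$ satisfies $D=p(E/c_0)\,C\,\varphi(D)\,C^{-1}$ with $D\in u\Mat_d(S)$; since $\varphi$ sends $u^kS$ into $u^{kp}S$, iteration forces $D\in u^{p^n}\Mat_d(S)$ for every $n$, whence $D=0$ by $u$-adic separation of $S\subseteq(K_0\otimes_{\Q_p}F)[\![u]\!]$. For existence I would iterate $A^{(n+1)}:=T(A^{(n)})$ starting from the constant matrix $A^{(0)}:=N_{\cM}$. The compatibility on $(N_{\cM},\varphi_{\cM}|_{u=0})$ built into $\Mod^{\varphi,N,\leq h}_{\cO_F}$, together with $N_{\nabla}(C)|_{u=0}=0$ since $N_{\nabla}=-u\lambda\,d/du$ carries a factor of $u$, yields $T(A^{(0)})|_{u=0}=A^{(0)}|_{u=0}$. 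Hence $A^{(1)}-A^{(0)}\in u\Mat_d(S)$, and the same contraction estimate gives $A^{(n+1)}-A^{(n)}\in u^{p^n}\Mat_d(S)$, so the partial sums converge $u$-adically to a solution of the fixed-point equation restricting to $N_{\cM}$ at $u=0$.

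The main obstacle is verifying that the limit lies in $\Mat_d(S)$ rather than merely in the $u$-adic completion of $S$, since $S$ need not be $u$-adically complete --- e.g.\ a $u$-adic Cauchy sequence in $\cO_F[1/\lambda]$ can acquire uncontrolled denominators along the zeros of $\lambda$. I would finesse this by first treating the minimal case $S_0=\cO_F[1/\lambda]$ via Kisin's construction in \cite{Kisin-FCrystals}, which produces a valid $N_\nabla^{\cM}$ on $\cM\otimes_{\cO_F}S_0$ satisfying our relation; uniqueness then forces it to coincide with the iteration limit, so in particular that limit lies in $\Mat_d(S_0)\subseteq\Mat_d(S)$. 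The case of general $S\supseteq S_0$ then follows by the Leibniz extension $N_\nabla^\cM(m\otimes s)=N_\nabla^{\cM}(m)\otimes s+m\otimes N_\nabla(s)$, which is well-defined because $S$ is $N_\nabla$-stable and preserves the intertwining relation since both tensor factors do.
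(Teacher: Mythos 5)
Your proposal is correct and follows essentially the same route as the paper: existence is ultimately imported from Kisin \cite[Lemma 1.3.10]{Kisin-FCrystals} (your iteration, which as you note need not converge inside $S$ itself, just repackages this), and uniqueness is the same $u$-adic contraction argument, which you phrase in matrix form via the fixed-point equation $A = p(E/c_0)\,C\varphi(A)C^{-1} - N_\nabla(C)C^{-1}$ rather than the paper's basis-free argument on $H(\cM_S)$. One small caution: when you write $D \in u^{p^n}\Mat_d(S)$ you should track $D$ in $\Mat_d\bigl(S \cap u^{p^n}(K_0\otimes_{\Q_p}F)[\![u]\!]\bigr)$ rather than literally $u^{p^n}\cdot\Mat_d(S)$, since $S$ need not be $u$-saturated; the conclusion $D=0$ is unaffected, and this is exactly what the paper sidesteps by working directly with $u^{p^i}\cM_S$.
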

\begin{proof}
The existence of $N_{\nabla}^{\cM}$ is \cite[Lemma 1.3.10]{Kisin-FCrystals}. We explain the (standard) argument for uniqueness. If $N_{\nabla, 1}^{\cM}$ and $N_{\nabla, 2}^{\cM}$ are two such operators, the difference $H  = N_{\nabla, 1}^{\cM} - N_{\nabla, 2}^{\cM}$ is an $S$-linear endomorphism of $\cM$ such that $H(\mathcal M_S) \subseteq u \mathcal M_S$ and
\begin{equation}\label{eqn:H-mono}
H \phz_{\cM} = p(E/c_0) \phz_{\cM} H.
\end{equation}
Since $\cM$ has height $\leq h$, and $E$ is a unit in $\cO_F[1/\lambda] \subseteq S$, $\cM_S$ is contained in the $S$-submodule generated by $\varphi_{\cM}(\cM_S)$. Thus, from \eqref{eqn:H-mono},  $H(\cM_S)$ is contained in the $S$-module generated by $\varphi_{\cM}(H(\cM_S))$. So, starting from $H(\cM_S) \subseteq u\cM_S$ we see by induction that $H(\cM_S) \subseteq u^{p^i}\cM_S$ for all $i$. Thus, $H(\cM_S) = \{0\}$.
\end{proof}

Given $\cM \in \Mod_{\cO_F}^{\varphi,N,\leq h}$, we typically write $N_{\nabla}^{\cM}$ for the differential operator on $\cM[1/\lambda]$ obtained from the previous lemma.

\begin{rmk}\label{rmk:monodrom-rmk}
For making matrix calculations, it is helpful to translate into matrix form. Choose a basis for $\cM$ over $\cO_F$ and write $C$ (resp.\ $B$) for the matrix of $\phz_{\cM}$ (resp. $N^{\cM}_{\nabla}$) whose column vectors record the action of the basis. {\em A priori}, $B$ has entries in $\cO_F[1/\lambda]$, but in fact $\lambda^{h-1}B$ has entries in $\cO_F$ (see the proof in \cite{Kisin-FCrystals}). The commutation relation for $N_{\nabla}^{\cM}$ and $\varphi_{\cM}$ is equivalent to
\begin{equation} \label{eq:matmono}
N_{\nabla}(C) + B C = p(E/c_0) C \phz (B).
\end{equation}
We sometimes refer to \eqref{eq:matmono} as the {\em monodromy relation}.
\end{rmk}

\begin{defn} \label{defn:Mcond}  If $\cM \in \Mod_{\cO_F}^{\varphi, N, \leq h}$, $\cM$ satisfies {\em the monodromy condition} if $N^{\cM}_{\nabla}(\cM) \subset \cM$.
\end{defn}
We will abuse language and also say $\fM \in \Mod_{\fS_{\Lambda}}^{\varphi, N, \leq h}$ (resp.\ $\widetilde{\fM} \in \Mod_{\fS_F}^{\varphi,N, \leq h}$) satisfies the monodromy condition if $\fM\otimes_{\fS_{\Lambda}} \cO_F$ (resp.\ $\widetilde{\fM}\otimes_{\fS_F} \cO_F$) satisfies the monodromy condition.


If $n \geq 0$, we write $\fS_{F,n}$ for the completion of $\fS_F$ at the ideal generated by $\phz^n(E)$, and we write $\iota_n : \mathcal O_F \iarrow \fS_{F,n}$ for the natural inclusion. For any embedding $\sigma:K_0 \iarrow F$, the roots of $\sigma(\phz^n(E))$ lie on $|u|=p^{-1/ep^n}$ and so the map $\iota_n$ factors through $\mathcal O_{F,[0,p^{-r}]}$ whenever $r \leq 1/ep^n$.  Given $\mathcal M \in \Mod_{\mathcal O_F}^{\phz,\leq h}$, we write $\mathcal M_n = \mathcal M \otimes_{\mathcal O_F,\iota_n} \fS_{F,n}$. By construction, $\iota_n(\lambda)$ is a unit multiple of $\phz^n(E)$ in $\fS_{F,n}$, so we also use $\iota_n$ to denote the natural map $\mathcal M[1/\lambda] \rightarrow \mathcal M_n[1/\phz^n(E)]$. The monodromy condition on $\cM$ is equivalent to $\iota_n N_{\nabla}^{\cM}(\cM) \subseteq \cM_n$ for all $n \geq 0$.
However, we have the following weaker criterion based on \cite[Proposition 5.3]{LLLM-ShapesShadows}.   
\begin{prop} 
\label{prop:Mcond}
If $\mathcal M \in \Mod_{\mathcal O_F}^{\phz, N, \leq h}$, then $\cM$ satisfies the monodromy condition if and only if $\iota_0N_{\nabla}^{\cM}(\cM) \subset \cM_0$.
\end{prop}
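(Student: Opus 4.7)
The plan is to handle $(\Rightarrow)$ as immediate from Definition \ref{defn:Mcond}: if $N_\nabla^{\cM}(\cM) \subset \cM$, then applying $\iota_n$ to both sides lands in $\cM_n$. All the work goes into $(\Leftarrow)$, which I would prove by induction on $n \geq 0$ on the statement $\iota_n N_\nabla^{\cM}(\cM) \subset \cM_n$, with the case $n = 0$ being the hypothesis. After fixing an $\cO_F$-basis of $\cM$ and letting $C$, $B$ denote the matrices of $\phz_{\cM}$, $N_\nabla^{\cM}$ as in Remark \ref{rmk:monodrom-rmk}, this becomes the assertion that $B$ has no ``pole'' along the vanishing locus of $\phz^n(E)$ for any $n \geq 0$. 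The a priori integrality $\lambda^{h-1} B \in \Mat_d(\cO_F)$ already confines all possible poles of $B$ to precisely those loci.

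A key structural input I would set up first is that $C \in \Mat_d(\cO_F)$ is invertible in a local neighborhood of the vanishing locus of $\phz^n(E)$ for each $n \geq 1$. This should follow from the height condition: since $E^h$ annihilates the cokernel of the linearization $\phz^{\ast}\cM \to \cM$, whose matrix is $C$, there is a $Y \in \Mat_d(\cO_F)$ with $C\, Y = E^h I_d$, and hence $\det(C)$ divides $E^{hd}$ in $\cO_F$. Consequently the zero locus of $\det(C)$ in the open unit disk is contained in that of $E$, and in particular avoids the vanishing loci of $\phz^n(E)$ for $n \geq 1$.

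The inductive step is then a pole-tracking argument on the matrix monodromy relation \eqref{eq:matmono}:
\[
N_\nabla(C) + B C = p(E/c_0)\, C\, \phz(B).
\]
Assume by induction that $B$ is regular along $\phz^{n-1}(E) = 0$. Because the zeros of $\phz^n(E)$ are precisely the $p$-th roots of the zeros of $\phz^{n-1}(E)$, the twist $\phz(B)$ is regular along $\phz^n(E) = 0$. For $n \geq 1$, every factor on the right-hand side is regular along $\phz^n(E) = 0$ (for $E$, because $E$ does not vanish there; for $C$, because $C \in \Mat_d(\cO_F)$; for $\phz(B)$, just noted), so the right-hand side is regular there, and since $N_\nabla(C)$ is too, $BC$ must be regular along $\phz^n(E) = 0$. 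Multiplying on the right by $C^{-1}$, available locally by the previous paragraph, yields the regularity of $B$ along $\phz^n(E) = 0$, which closes the induction.

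The main obstacle I anticipate is making ``regular along $\phz^n(E) = 0$'' precise in a way that matches the functorial condition $\iota_n N_\nabla^{\cM}(\cM) \subset \cM_n$ on the completion $\fS_{F,n}$. Concretely, I would phrase it as a non-negativity condition on the $\phz^n(E)$-adic valuations of the entries of $\iota_n(\lambda^{h-1} B)$ in $\fS_{F,n}$, renormalized by the order to which $\iota_n(\lambda)$ vanishes along $\phz^n(E)$, and then recast each step of the pole-tracking argument above as an inequality among such valuations. Once that bookkeeping is installed, the argument forces $B \in \Mat_d(\cO_F)$, using that the candidate pole locus of $B$ lies in $\bigcup_{n \geq 0} \{\phz^n(E) = 0\}$, and hence $N_\nabla^{\cM}(\cM) \subset \cM$ as desired.
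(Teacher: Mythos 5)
Your argument is correct and is essentially the matrix-language version of the paper's proof: both induct on $n$, use the commutation relation $N_{\nabla}^{\cM}\phz_{\cM} = p(E/c_0)\phz_{\cM}N_{\nabla}^{\cM}$ (your equation \eqref{eq:matmono}) to propagate regularity from $\fS_{F,n-1}$ to $\fS_{F,n}$, and then invoke the height condition together with the fact that $E$ is a unit in $\fS_{F,n}$ for $n \geq 1$ to ``divide off'' the Frobenius. The paper phrases this last step via the adjugate-type matrix $Y$ with $CY = E^h I$ and two applications of the Leibniz rule, whereas you pass through $\det(C) \mid E^{hd}$ and multiply by $C^{-1}$ over $\fS_{F,n}$ -- the same idea, with the Leibniz applications already absorbed into the matrix form of the relation.
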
 
\begin{proof}
One direction is clear. Supposing $\iota_0N_{\nabla}^{\cM}(\cM) \subset \cM_0$, we will prove by induction on $n$ that in fact $\iota_n N_{\nabla}^{\mathcal M}(\mathcal M) \subset \mathcal M_n$. So, let $n \geq 0$ and assume that $\iota_n N_{\nabla}^{\mathcal M}(\mathcal M) \subset \mathcal M_n$. Note that $\varphi$ induces a natural map $\phz: \fS_{F,n}  \rightarrow \fS_{F,n+1}$ and $\phz_{\mathcal M}$ induces a $\phz$-semilinear operator $\phz_n: \mathcal M_n \rightarrow \mathcal M_{n+1}$ such that the diagram
\begin{equation*}
\xymatrix{
\mathcal M \ar[r]^{\phz_{\mathcal M}} \ar[d]_{\iota_n} & \mathcal M \ar[d]^{\iota_{n+1}}\\
\mathcal M_n \ar[r]_-{\phz_n} & \mathcal M_{n+1}
}
\end{equation*}
commutes. Using $N_{\nabla}^{\cM}\varphi_{\cM} = p(E/c_0)\varphi_{\cM}N_{\nabla}^{\cM}$, we deduce
\begin{equation}\label{eqn:contain-monodromy-proof}
\iota_{n+1} N_{\nabla}^{\mathcal M}\phz_{\mathcal M}(\mathcal M) = {p\over c_0} \iota_{n+1}(E)\cdot  \phz_n \left( \iota_n N_{\nabla}^{\cM}(\mathcal M) \right) \subset \mathcal M_{n+1}.
\end{equation}
On the other hand, since $\mathcal M$ has height $\leq h$, $E^h\mathcal M$ is contained in the $\mathcal O_F$-span of $\phz_{\cM}(\cM)$. So,  \eqref{eqn:contain-monodromy-proof} implies that $\iota_{n+1}N_{\nabla}^{\mathcal M}(E^h\mathcal M) \subset \mathcal M_{n+1}$. The containment $\iota_{n+1}N_{\nabla}^{\mathcal M}(\mathcal M) \subset \mathcal M_{n+1}$ now follows from the Leibniz rule and the fact that $\iota_{n+1}(E)$ is a unit in $\fS_{F,n+1}$ for $n\geq 0$.     
\end{proof}


\begin{cor} \label{cor:monoverR} 
Let $\cM \in \Mod_{\cO_F}^{\varphi, N, \leq h}$, $r \leq 1/e$ and $\cM_r = \cM\otimes_{\cO_F} \mathcal O_{F,[0,p^{-r}]}$. The following are equivalent:
\begin{enumerate}[label=(\alph*)]
\item $\cM$ satisfies the monodromy condition.
\item There exists a differential operator $N_{\nabla}^{\cM_r}: \cM_r \rightarrow \cM_r$ over $N_{\nabla}$ such that $N_{\nabla}^{\cM_r}|_{u=0} = N_{\cM}$  and $N_{\nabla}^{\cM_r} \varphi_{\cM_r} = p(E/c_0) \varphi_{\cM_r} N_{\nabla}^{\cM_r}$.
\end{enumerate}
\end{cor}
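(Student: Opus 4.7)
The plan is to handle the two directions separately, with Proposition \ref{prop:Mcond} and the uniqueness part of Lemma \ref{lemma:mono-uniqueness} doing most of the work.

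Direction (a) $\Rightarrow$ (b) is essentially automatic. Since $N_{\nabla} = -u\lambda\, d/du$ preserves $\cO_{F,[0,p^{-r}]}$ (because $\lambda \in \cO_F$ and $d/du$ preserves convergence on any closed disc), I would define $N_{\nabla}^{\cM_r}$ by base change of $N_{\nabla}^{\cM}$ along $\cO_F \hookrightarrow \cO_{F,[0,p^{-r}]}$. The commutation with $\varphi$ and the initial condition $N_{\nabla}^{\cM_r}|_{u=0} = N_{\cM}$ are inherited from the corresponding properties of $N_{\nabla}^{\cM}$.

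For (b) $\Rightarrow$ (a), the strategy is to apply Proposition \ref{prop:Mcond} and verify $\iota_0 N_{\nabla}^{\cM}(\cM) \subseteq \cM_0$. The hypothesis $r \leq 1/e$ is crucial here: the roots of each $\sigma(E)$ lie on $|u| = p^{-1/e} \leq p^{-r}$, so the map $\iota_0 : \cO_F \to \fS_{F,0}$ factors through $\cO_{F,[0,p^{-r}]}$, and hence $\iota_0 : \cM \to \cM_0$ factors as $\cM \to \cM_r \to \cM_0$. It therefore suffices to show $N_{\nabla}^{\cM}(\cM) \subseteq \cM_r$, both viewed inside the common overring $\cM_r[1/\lambda] = \cM \otimes_{\cO_F} S$ with $S := \cO_{F,[0,p^{-r}]}[1/\lambda]$.

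The main input is Lemma \ref{lemma:mono-uniqueness} applied to this $S$. The chain of inclusions $\cO_F[1/\lambda] \subseteq S \subseteq (K_0\otimes_{\Q_p} F)[\![u]\!]$ holds because $\lambda \in 1 + u(K_0\otimes F)[\![u]\!]$ is a unit in formal power series; $\varphi$-stability uses $\varphi(\cO_{F,[0,p^{-r}]}) \subseteq \cO_{F,[0,p^{-r/p}]}$ from the background together with the identity $\varphi(\lambda) = \lambda c_0/E$ (so $\varphi(1/\lambda) \in S$); and $N_{\nabla}$-stability follows from closure of $\cO_{F,[0,p^{-r}]}$ under $d/du$. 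Now both $N_{\nabla}^{\cM} \otimes 1$ and the $\lambda$-localization of the given $N_{\nabla}^{\cM_r}$ are operators over $N_{\nabla}$ on $\cM \otimes_{\cO_F} S$ satisfying the reduction at $u=0$ and the commutation with $\varphi$, so by uniqueness they coincide. Since the latter preserves $\cM_r$, the former does as well, which is exactly the containment needed; Proposition \ref{prop:Mcond} then concludes. The main obstacle I anticipate is purely bookkeeping: making sure the enlarged ring $S$ genuinely sits inside $(K_0\otimes_{\Q_p} F)[\![u]\!]$ and is closed under the required operators, so that Lemma \ref{lemma:mono-uniqueness} applies without modification.
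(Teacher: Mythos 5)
Your proof is correct and follows the same strategy as the paper's: for (b) $\Rightarrow$ (a), use the uniqueness in Lemma \ref{lemma:mono-uniqueness} over $S = \cO_{F,[0,p^{-r}]}[1/\lambda]$ to identify $N_{\nabla}^{\cM}$ with the localization of $N_{\nabla}^{\cM_r}$, deduce $N_{\nabla}^{\cM}(\cM) \subseteq \cM_r$, note $\iota_0$ factors through $\cO_{F,[0,p^{-r}]}$ because $r \leq 1/e$, and conclude via Proposition \ref{prop:Mcond}. You are merely more explicit than the paper about verifying that $S$ satisfies the hypotheses of Lemma \ref{lemma:mono-uniqueness}, which the paper leaves implicit.
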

\begin{proof}  
Clearly, (a) implies (b). Suppose we are given (b).  By Lemma \ref{lemma:mono-uniqueness}, $N_{\nabla}^{\cM} = N_{\nabla}^{\cM_r}$ on $\cM_r[1/\lambda]$ and so the assumption in (b)  forces $N_{\nabla}^{\cM}(\cM) \subseteq \cM_r \cap \cM[1/\lambda]$. On the other hand, since $r \leq 1/e$, the natural map $\cO_F \rightarrow \fS_{F,0}$ factors through $\mathcal O_{F,[0,p^{-r}]}$, from which we deduce $\iota_0 N_{\nabla}^{\cM}(\cM) \subseteq \cM_0$. So, we conclude (a) holds by Proposition \ref{prop:Mcond}.
\end{proof} 

\subsection{Kisin modules and Galois representations}

By \cite{Kisin-FCrystals}, Kisin modules over $\fS_{\Lambda}$ satisfying the monodromy condition are related to Galois representations. To be precise, denote by $\mathrm{MF}^{\varphi, N, \wa}_F \subseteq \mathrm{MF}_F^{\varphi, N}$ the full subcategory of weakly-admissible filtered $(\varphi, N)$-modules. Then, we have a contravariant equivalence of categories
$$
V_{\mathrm{st}}^{\ast} : \mathrm{MF}^{\varphi, N, \wa}_F \rightarrow \mathrm{Rep}_{F}^{\mathrm{st}}(G_K)
$$
where $\mathrm{Rep}_{F}^{\mathrm{st}}(G_K)$ is the category of $F$-linear semistable representations of $G_K$ (\cite[Section 3.1.2]{BreuilMezard-Multiplicities}).  Taking $N = 0$, this restricts to an equivalence $V_{\cris}^{\ast} : \mathrm{MF}^{\varphi, \wa}_F \rightarrow \mathrm{Rep}_{F}^{\cris}(G_K)$ onto the category of $F$-linear crystalline representation of $G_K$.

Let $\cO_{\cE, \Lambda}$ denote the $p$-adic completion of $\fS_{\Lambda}[1/u]$ and extend $\varphi$ from $\fS_{\Lambda}[1/u]$ to $\cO_{\cE,\Lambda}$ by continuity. Note that $\cO_{\cE, \Lambda} \otimes_{\Lambda} \F = \F(\!(u)\!)$. The category of \'etale $\phz$-modules over $\cO_{\cE, \Lambda}$ (resp. $\F(\!(u)\!)$) is denoted by $\Mod^{\phz, \et}_{\cO_{\cE, \Lambda}}$ (resp.  $\Mod^{\phz, \et}_{\F(\!(u)\!)}$).  By \cite{Fontaine-GFestschriftPaper}, there are contravariant equivalences of categories
\[
V_{\Lambda}^{\ast} :\Mod^{\phz, \et}_{\cO_{\cE, \Lambda}} \ra \mathrm{Rep}_{\Lambda} (G_{\infty}), \quad  V_{\F}^{\ast}:\Mod^{\phz, \et}_{\F(\!(u)\!)} \ra \mathrm{Rep}_{\F} (G_{\infty})
\]
that satisfy the compatibility 
\begin{equation}\label{eqn:modp-compat}
V^{\ast}_{\Lambda}(M) \otimes_{\Lambda} \F \cong V_{\F}^{\ast}(M \otimes_{\Lambda} \F)
\end{equation}
 for any $M \in \Mod_{\cO_{\cE,\Lambda}}^{\varphi,\et}$. In particular, if $\fM \in \Mod_{\fS_{\Lambda}}^{\varphi,N, \leq h}$ then we have $G_\infty$-representations $V_{\Lambda}^{\ast}(\fM \otimes_{\fS_{\Lambda}} \cO_{\cE,\Lambda})$ over $\Lambda$ and $V_{\F}^{\ast}(\fM\otimes_{\Lambda} \F[u^{-1}])$ over $\F$.
 
If $W$ is a representation of $G_K$, we use $W|_{G_\infty}$ denote $W$ as a $G_\infty$-representation via restriction.
   
\begin{thm}[Kisin] \label{thm:comp}  
 If $\fM \in \Mod_{\fS_\Lambda}^{\varphi, N, \leq h}$ and $\fM$ satisfies the monodromy condition, then $D(\fM)$ is weakly-admissible. Moreover, $V_{\Lambda}^{\ast}(\fM \otimes_{\fS_{\Lambda}} \cO_{\cE, \Lambda}) [1/p] \cong V_{\mathrm{st}}^{\ast}(D(\fM))|_{G_{\infty}}$.
\end{thm}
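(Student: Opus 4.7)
The plan is to derive this directly from Kisin's theorems in \cite{Kisin-FCrystals} by base-changing from $\fS_\Lambda$ to $\cO_F$ and then citing the two main structural results on the $\cO_F$-side. Let me set $\cM = \fM \otimes_{\fS_\Lambda} \cO_F \in \Mod_{\cO_F}^{\varphi, N, \leq h}$. By hypothesis $\cM$ satisfies the monodromy condition, so $N_{\nabla}^{\cM}$ preserves $\cM$ and $\cM$ lies in the subcategory $\Mod_{\cO_F}^{\varphi, N_{\nabla}, \leq h}$. Note also that the functor $D$ appearing in the statement is, by definition of the composition \eqref{eqn:functor-composition}, identified with $D$ applied to $\cM$: the underlying $F$-vector space $\cM/u\cM$, the Frobenius $\varphi_{\cM}|_{u=0}$, the operator $N_{\cM}$, and the filtration on $D(\cM)$ all depend only on $\cM$ (and in fact only on $\fM\otimes_{\fS_\Lambda} \fS_F$).

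For the first assertion, I would invoke Kisin's main equivalence of categories (\cite[Theorem 1.2.15]{Kisin-FCrystals}), which states that $D$ restricts to an equivalence between $\Mod_{\cO_F}^{\varphi, N_{\nabla}, \leq h}$ and the full subcategory of $\mathrm{MF}_F^{\varphi, N}$ consisting of effective weakly-admissible filtered $(\varphi, N)$-modules whose Hodge--Tate weights lie in $[0, h]$. Since $\cM$ satisfies the monodromy condition, $D(\cM) = D(\fM)$ lies in the image of this equivalence and is therefore weakly-admissible.

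For the Galois-theoretic assertion, the plan is to compare the two constructions of a $G_{\infty}$-representation attached to $\fM$. On one side, inverting $u$ and $p$-adically completing gives an \'etale $\varphi$-module $\fM \otimes_{\fS_\Lambda} \cO_{\cE, \Lambda}$ to which $V^{\ast}_\Lambda$ assigns a $G_\infty$-representation over $\Lambda$. On the other, the semistable representation $V^{\ast}_{\mathrm{st}}(D(\fM))$ of $G_K$ is built from period ring constructions. The key input is Kisin's comparison (see \cite[Proposition 2.1.5 and Corollary 2.1.6]{Kisin-FCrystals}) that for $\cM \in \Mod_{\cO_F}^{\varphi, N_{\nabla}, \leq h}$ there is a natural $G_\infty$-equivariant isomorphism
\[
V^{\ast}_{F}\bigl(\cM \otimes_{\cO_F} \cO_{\cE, F}\bigr) \;\cong\; V^{\ast}_{\mathrm{st}}(D(\cM))\bigr|_{G_\infty},
\]
where $\cO_{\cE, F} = \cO_{\cE, \Lambda}[1/p]$. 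The identification $\cM \otimes_{\cO_F} \cO_{\cE, F} \cong (\fM \otimes_{\fS_\Lambda} \cO_{\cE, \Lambda})[1/p]$ is immediate, and the desired statement then follows by combining with the rationalization compatibility $V^{\ast}_\Lambda(M)[1/p] \cong V^{\ast}_F(M[1/p])$ implicit in \eqref{eqn:modp-compat}.

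The one genuinely non-trivial ingredient is the comparison isomorphism of Kisin, whose proof rests on embedding $\cO_F$ (or $\cO_F[1/\lambda]$) compatibly into crystalline/semistable period rings and checking that the two functors agree after suitable base change; I do not plan to reproduce this analysis. The rest of the argument is purely formal: matching functors across \eqref{eqn:functor-composition} and verifying that the monodromy condition on $\fM$ at the $\fS_\Lambda$-level is exactly what is required to land in Kisin's categories on the $\cO_F$-level. The most delicate bookkeeping concerns keeping straight the distinction between $N_{\fM}$ (defined only modulo $u$) and $N^{\cM}_{\nabla}$, but Lemma \ref{lemma:mono-uniqueness} ensures the relevant uniqueness.
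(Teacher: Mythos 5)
There is a genuine gap in the argument for weak-admissibility. You invoke Kisin's Theorem 1.2.15 as stating an equivalence between $\Mod_{\cO_F}^{\varphi, N_{\nabla}, \leq h}$ and the category of \emph{weakly-admissible} effective filtered $(\varphi, N)$-modules, and then conclude that $D(\cM)$ is weakly-admissible simply because it lies in the image. But that is not what Kisin's theorem over $\cO$ says: the equivalence is with \emph{all} effective filtered $(\varphi, N)$-modules (with Hodge--Tate weights in $[0,h]$), not just the weakly-admissible ones; the paper states this explicitly when introducing the monodromy condition in Section \ref{sec:kisinmodules}, citing Kisin's Theorem 1.2.5. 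So the monodromy condition over $\cO_F$ alone does not yield weak-admissibility, and your argument as written would incorrectly conclude that every $\cM$ satisfying the monodromy condition has weakly-admissible $D(\cM)$.

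The missing ingredient is the integral (or at least $\fS_F$-level) structure. Kisin's Theorem 1.3.8 is the result that characterizes weak-admissibility of $D(\cM)$, for $\cM \in \Mod_{\cO}^{\varphi, N_{\nabla}, \leq h}$, precisely by the existence of a $\varphi$-stable $\fS$-lattice inside $\cM$. Since the hypothesis of Theorem \ref{thm:comp} gives $\fM$ over $\fS_\Lambda$ to begin with, $\cM = \fM \otimes_{\fS_\Lambda} \cO_F$ has such a lattice by fiat, and weak-admissibility follows (this, together with Lemmas 1.3.10 and 1.3.13 of Kisin, is exactly what the paper cites). The paper's own proof of Theorem \ref{thm:descent-monodromy} makes this distinction vivid: when the descent is only known over $\cO_F$, weak-admissibility must be checked by hand via the filtration, but ``if $\cM_{a_p}$ were to descend to $\fS_\Lambda$ \ldots\ then the weak-admissibility is automatic by Theorem \ref{thm:comp}.'' If your reading of Kisin's Theorem 1.2.15 were correct, that caveat would be vacuous.

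The second half of your argument --- comparing $V_\Lambda^\ast(\fM \otimes_{\fS_\Lambda} \cO_{\cE,\Lambda})[1/p]$ with $V_{\mathrm{st}}^\ast(D(\fM))|_{G_\infty}$ via Kisin's Section 2.1 --- is essentially the same as the paper's (which cites Corollary 2.1.4 and Proposition 2.1.5), and the formal bookkeeping through the scalar extensions in \eqref{eqn:functor-composition} is fine.
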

\begin{proof} This is a summary of results of \cite{Kisin-FCrystals}. Specifically, the first statement follows from applying Lemma 1.3.13, Lemma 1.3.10, and Theorem 1.3.8 of {\em loc.\ cit.}\ to $\fM$. The second statement follows from Corollary 2.1.4 and Proposition 2.1.5 in the same reference. (See also \cite[Theorem 5.4.1]{Liu-Torsion}.)
\end{proof}

If $W$ is an $\mathbb{F}$-linear representation of a group $G$, write $W^{\mathrm{ss}}$ for the semi-simplification of $W$ as a $G$-representation. If $V$ is an $F$-linear representation of $G_K$, we write $\overline{V}$ for $(T/\fm_F T)^{\mathrm{ss}}$ where $T \subseteq V$ is any $G_K$-stable lattice.

\begin{cor}\label{cor:kisin-mod-p}
Let $\fM \in \Mod_{\fS_{\Lambda}}^{\varphi, N, \leq h}$ and assume that $\mathfrak M$ satisfies the monodromy condition. Then, given a semi-simple $\mathbb F$-linear representation $V_{\mathbb F}$ of $G_K$ we have $
\overline{V_{\mathrm{st}}^{\ast}(D(\fM))} \cong V_{\mathbb F}$ if and only if $(V_{\mathbb F}^{\ast}(\mathfrak M \otimes_{\Lambda} \mathbb F[u^{-1}]))^{\mathrm{ss}} \cong V_{\mathbb F}|_{G_\infty}$.
\end{cor}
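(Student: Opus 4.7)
The plan is to connect the two sides by passing through the $G_\infty$-representation that Kisin's functor $V_\Lambda^\ast$ naturally produces from $\fM$, and then use that restriction from $G_K$ to $G_\infty$ is faithful on semisimple mod $p$ representations.

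First, let $T := V_{\Lambda}^{\ast}(\fM \otimes_{\fS_{\Lambda}} \cO_{\cE, \Lambda})$, which is a $\Lambda$-lattice with an action of $G_\infty$. By Theorem \ref{thm:comp}, because $\fM$ satisfies the monodromy condition, $D(\fM)$ is weakly admissible, and there is an isomorphism of $G_\infty$-representations $T[1/p] \cong V_{\mathrm{st}}^{\ast}(D(\fM))|_{G_\infty}$. Let $V := V_{\mathrm{st}}^\ast(D(\fM))$ and choose any $G_K$-stable $\Lambda$-lattice $T' \subseteq V$, which exists by compactness of $G_K$. By construction $\overline{V} = (T'/\fm_F T')^{\mathrm{ss}}$ as $G_K$-representations, and since $T$ and $T'$ are both $\Lambda$-lattices in the same $F$-vector space, the Brauer--Nesbitt theorem gives an isomorphism of $G_\infty$-representations $(T/\fm_F T)^{\mathrm{ss}} \cong (T'/\fm_F T')^{\mathrm{ss}} = \overline{V}|_{G_\infty}$.

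Next, I would identify $(T/\fm_F T)^{\mathrm{ss}}$ with $(V_{\F}^{\ast}(\fM\otimes_{\Lambda}\F[u^{-1}]))^{\mathrm{ss}}$. Since $\cO_{\cE, \Lambda} \otimes_{\Lambda} \F = \F(\!(u)\!)$, the compatibility \eqref{eqn:modp-compat} applied to $M = \fM \otimes_{\fS_\Lambda} \cO_{\cE,\Lambda}$ yields $T/\fm_F T \cong V_{\F}^{\ast}(\fM \otimes_{\Lambda} \F[u^{-1}])$ as $G_\infty$-representations. Semisimplifying and combining with the previous paragraph, we conclude
\[
\overline{V}\big|_{G_\infty} \cong \bigl(V_{\F}^{\ast}(\fM \otimes_{\Lambda} \F[u^{-1}])\bigr)^{\mathrm{ss}}.
\]
This already gives the ``only if'' direction of the corollary: if $\overline{V} \cong V_{\F}$ as $G_K$-representations, restrict both sides to $G_\infty$.

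The remaining direction is where the real content lies. Assuming $(V_{\F}^{\ast}(\fM \otimes_{\Lambda}\F[u^{-1}]))^{\mathrm{ss}} \cong V_{\F}|_{G_\infty}$, the display above gives an isomorphism $\overline{V}|_{G_\infty} \cong V_{\F}|_{G_\infty}$ of semisimple $\F$-linear $G_\infty$-representations, and I need to upgrade this to an isomorphism of $G_K$-representations. The key input, which is the main obstacle, is that the restriction functor from $\F$-linear $G_K$-representations to $\F$-linear $G_\infty$-representations is fully faithful; this is a consequence of the Fontaine--Wintenberger theory of the field of norms applied to the strictly APF extension $K_\infty/K$, and is standard in this context (see for instance \cite[Lemma 2.1.2]{Kisin-FCrystals} or the discussion in \cite{Liu-Torsion}). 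Full faithfulness on $\F$-linear representations implies in particular that two semisimple $\F$-linear $G_K$-representations that become isomorphic after restriction to $G_\infty$ are already isomorphic as $G_K$-representations, giving $\overline{V} \cong V_{\F}$ as desired.
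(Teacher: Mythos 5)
Your proposal follows the paper's route up to the final step: using Theorem \ref{thm:comp} and \eqref{eqn:modp-compat} to compare $\overline{V_{\mathrm{st}}^{\ast}(D(\fM))}\big|_{G_\infty}$ with $(V_{\F}^{\ast}(\fM \otimes_{\Lambda} \F[u^{-1}]))^{\mathrm{ss}}$, and then upgrading an isomorphism over $G_\infty$ to one over $G_K$. However, the justification you give for the last step is wrong, and the gap is genuine: it is \emph{not} true that the restriction functor from $\F$-linear representations of $G_K$ to $\F$-linear representations of $G_\infty$ is fully faithful. For instance, if $\mu_p \subseteq K$ then $K(\pi_1)/K$ is a degree-$p$ Galois subextension of $K_\infty/K$, and the nonsplit unipotent $2$-dimensional $\F_p$-representation of $\Gal(K(\pi_1)/K) \cong \Z/p$ inflates to a representation of $G_K$ that becomes \emph{trivial} upon restriction to $G_\infty$; it and $\mathbf{1}^{\oplus 2}$ are thus isomorphic over $G_\infty$ but not over $G_K$. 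Neither Fontaine--Wintenberger's field-of-norms equivalence nor \cite[Lemma 2.1.2]{Kisin-FCrystals} gives the blanket full faithfulness you invoke; what \emph{is} known in that direction concerns representations arising from torsion Kisin modules (after further hypotheses), not arbitrary $\F$-linear representations.

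The fix is the paper's actual argument, which is more elementary and exactly tailored to what is needed: by a theorem of Serre, any semi-simple $\F$-linear representation of $G_K$ is \emph{tamely ramified}, i.e.\ factors through $G_K/P_K$ where $P_K$ is wild inertia; since $K_\infty/K$ is totally wildly ramified, $G_\infty \cdot P_K = G_K$, so restriction to $G_\infty$ is fully faithful \emph{on semi-simple representations}, and moreover $W|_{G_\infty}$ remains semi-simple whenever $W$ is (a fact you implicitly used without justification when you wrote ``isomorphism $\overline{V}|_{G_\infty}\cong V_{\F}|_{G_\infty}$ of semisimple $\F$-linear $G_\infty$-representations''). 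Since you only ever apply your claimed full faithfulness to the two semi-simple objects $\overline{V}$ and $V_{\F}$, substituting this correct statement for your incorrect one repairs the argument, and the rest of your proposal --- the use of Brauer--Nesbitt to compare two lattices, the mod $p$ compatibility, and the bidirectional deduction --- is fine.
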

\begin{proof}
Recall, a semi-simple representation of $G_K$ in characteristic $p$ is tamely ramified (\cite[Proposition 4]{Serre-PropertiesGaloisiennes}). In particular, since $K_\infty/K$ is totally wildly ramified, if $W$ is a semi-simple representation of $G_K$ then $W|_{G_\infty}$ is semi-simple, and restriction of semi-simple representations of $G_K$ to semi-simple representations of $G_\infty$ is a fully faithful functor. Thus, by Theorem \ref{thm:comp} and \eqref{eqn:modp-compat} we have $\overline{V_{\mathrm{st}}^{\ast}(D(\mathfrak M))}|_{G_\infty} \cong (V_{\F}^{\ast}(\fM \otimes_{\Lambda} \F[u^{-1}]))^{\mathrm{ss}}$. The corollary follows.
\end{proof}

\section{A family of two-dimensional $\phz$-modules}\label{sec:family-2d}

From now on, we take $K = \Qp$ and restrict to the crystalline case by viewing $\Mod_{\cO_F}^{\varphi,\leq h}$ as a full subcategory of $\Mod_{\cO_F}^{\varphi,N,\leq h}$ by forcing $N_{\cM} = 0$.

We begin with some notation on two-dimensional $F$-linear crystalline representations of $G_{\mathbb Q_p}$. For each $a_p \in \mathfrak m_F$ and integer $h \geq 1$ there is a unique, up to isomorphism, $D_{h+1,a_p} \in \mathrm{MF}^{\varphi,\wa}_F$ such that $\varphi$ has characteristic polynomial $X^2 - a_pX + p^{h}$ and the filtration's non-trivial jumps are in degrees $0$ and $h$. Let $V_{h+1,a_p} = V_{\cris}^{\ast}(D_{h+1,a_p})$. Then, $V_{h+1,a_p}$ is an irreducible crystalline representation of $G_{\mathbb Q_p}$ with Hodge--Tate weights $0 < h$.\footnote{The convention here is that the cyclotomic character has Hodge--Tate weight $1$.} Every two-dimensional, irreducible, crystalline representation over $F$ is a twist of some such $V_{h+1,a_p}$. See \cite[Section 3.1.2]{BreuilMezard-Multiplicities} for details and references.

\begin{rmk}\label{rmk:basis-switch}
Typically, $D_{h+1,a_p}$ is presented as $Fe_1\oplus Fe_2$ where $Fe_1$ is the non-trivial line in the filtration on $D_{h+1,a_p}$ and the matrix of $\varphi$ in the basis $\{e_1,e_2\}$ is given by $\begin{smallpmatrix} 0 & -1 \\ p^{h} & a_p \end{smallpmatrix}$ (cf.\ \cite{BergerLiZhu-SmallSlopes,Breuil-SomeRepresentations2}). It is convenient for us, however, to use the basis $\{p^{h} e_2, -e_1\}$ in which the matrix of $\varphi$ is $\begin{smallpmatrix} a_p & -1 \\ p^{h} & 0 \end{smallpmatrix}$.
\end{rmk}

Our goal in this section is to associate to  $V_{h+1, a_p}$ an explicit finite height $\phz$-module $\widetilde{\cM}$ over $R = \cO_{F, [0, p^{-1/p}]}$ that satisfies condition (b) in Corollary \ref{cor:monoverR}.  We further explain (Theorem \ref{thm:descent-monodromy}) that any descent $\cM$  of  $\widetilde{\cM}$ to $\cO_F$ satisfies the monodromy condition and $D(\cM) \cong D_{h+1, a_p}$ is weakly-admissible.

From now on, we fix an integer $h \geq 1$. For $K = \Qp$ we use the uniformizer $\pi = -p$, so that $E(u) = u + p$.  Let $\fM_0=\fS_{\Lambda}^{\oplus 2}$ denote the Kisin module over $\fS_{\Lambda}$ with Frobenius $\phz$ given by 
\[
C_0 = \begin{pmatrix} 0 & -1 \\ E^{h} & 0 \end{pmatrix}.
\]
Clearly $\fM_0$ has height $\leq h$. Moreover, since $E(u) = u+p$, we have
$$
{C_0}|_{u=0} = \begin{pmatrix} 0 & -1 \\ p^h & 0 \end{pmatrix},
$$
which is the matrix of $\varphi$ acting on $D_{h+1,0}$ in the basis described in Remark \ref{rmk:basis-switch}. The key step in justifying $D(\fM_0) \cong D_{h+1,0}$ is showing $\fM_0$ satisfies the monodromy condition. We do that by explicitly determining the differential operator $N^{\cM_0}_{\nabla}$ on $\cM_0 = \fM_0 \otimes_{\fS_F} \cO_F$.

 Define $\lambda_+ = \prod_{n \geq 0} {\varphi^{2n}(E(u)/c_0)}$ and $\lambda_- = \prod_{n\geq 0} {\varphi^{2n+1}(E(u)/c_0)}$. Note the crucial identities:
\begin{equation} \label{eq:a0}
\lambda = \lambda_+\lambda_-, \quad \varphi(\lambda_+) = \lambda_-, \quad \varphi(\lambda_-) = {c_0\over E} \lambda_+ =: \lambda_{++}.
\end{equation}
For $f \in \cO_F$ we write $f ' = \frac{df}{du}$. Then, $\varphi(f)' = pu^{p-1} \varphi(f')$ and so from \eqref{eq:a0}, we deduce
\begin{equation}\label{eqn:a0-derivs}
\varphi(\lambda_+') = {1\over pu^{p-1}} \lambda_-', \quad \varphi(\lambda_-') = {c_0\over pu^{p-1}}\left({\lambda_+'\over E} - {\lambda_+\over E^2}\right) = {1\over pu^{p-1}} \lambda_{++}'.
\end{equation}

\begin{prop} \label{prop:diagonal}  In the natural basis for $\cM_0$, the matrix of $N^{\cM_0}_{\nabla}$ is
\[
B = \begin{pmatrix} h u\lambda_+ \lambda_-' & 0  \\ 0  & h u\lambda_-\lambda_+' \end{pmatrix}.
\]
In particular, $\cM_0$ satisfies the monodromy condition.  
\end{prop}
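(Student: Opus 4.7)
The plan is to verify directly that the matrix $B$ displayed in the statement represents $N^{\cM_0}_{\nabla}$ in the standard basis, and then to read off the monodromy condition as a consequence of having entries in $\cO_F$. By the uniqueness clause of Lemma \ref{lemma:mono-uniqueness}, it suffices to exhibit a differential operator on $\cM_0$ (a priori over $\cO_F[1/\lambda]$) that lifts $N_\nabla$, satisfies the commutation relation with $\phz_{\cM_0}$, and reduces to $N_{\cM_0} = 0$ at $u = 0$. The candidate $B$ takes care of the initial condition immediately, since each diagonal entry carries a factor of $u$, and the entries visibly lie in $\cO_F$ because $\lambda_\pm \in \cO_F$ and differentiation preserves $\cO_F$.

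The main step is the verification of the matrix monodromy relation \eqref{eq:matmono}. Since $B$ is diagonal and $C_0$ is antidiagonal, the matrix equation decouples into two scalar identities, one for each off-diagonal entry. Using $E'=1$ and $\lambda = \lambda_+\lambda_-$, one computes $N_\nabla(C_0) + B C_0$ explicitly, with only a $(1,2)$-entry $-hu\lambda_+\lambda_-'$ and a $(2,1)$-entry $hu E^{h-1}(E\lambda_-\lambda_+' - \lambda_+\lambda_-)$. On the other side, $p(E/c_0)\, C_0\, \phz(B)$ is computed using \eqref{eq:a0} and \eqref{eqn:a0-derivs}: for the $(1,2)$-entry one substitutes $\phz(\lambda_-\lambda_+') = \phz(\lambda_-)\phz(\lambda_+') = (c_0\lambda_+/E)(\lambda_-'/pu^{p-1})$, and the prefactor $pu^p(E/c_0)$ collapses to give exactly $hu\lambda_+\lambda_-'$; for the $(2,1)$-entry one substitutes $\phz(\lambda_+\lambda_-') = \lambda_-\cdot (c_0/pu^{p-1})(\lambda_+'/E - \lambda_+/E^2)$ using the more delicate second identity in \eqref{eqn:a0-derivs}, and after multiplying by $pu^p E^{h+1}/c_0$ the two terms match $hu E^h\lambda_-\lambda_+'$ and $-hu E^{h-1}\lambda_+\lambda_-$ on the nose.

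Having verified the relation, Lemma \ref{lemma:mono-uniqueness} identifies $B$ with the matrix of $N^{\cM_0}_\nabla$, and since $B$ has entries in $\cO_F$ we conclude $N^{\cM_0}_\nabla(\cM_0) \subseteq \cM_0$, so $\cM_0$ satisfies the monodromy condition in the sense of Definition \ref{defn:Mcond}. The only real obstacle is purely algebraic bookkeeping: one has to keep track of the interplay between $\phz(\lambda_+) = \lambda_-$ and the second-order identity for $\phz(\lambda_-')$, which mixes $\lambda_+'/E$ with $\lambda_+/E^2$; the symmetry-breaking between the two diagonal entries ultimately comes from this, but no further $p$-adic analysis is required.
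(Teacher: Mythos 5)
Your proof is correct and follows the same route as the paper: invoke the uniqueness part of Lemma~\ref{lemma:mono-uniqueness}, verify the matrix monodromy relation \eqref{eq:matmono} for $C_0$ and $B$ using the identities \eqref{eq:a0} and \eqref{eqn:a0-derivs}, and note that the entries of $B$ lie in $\cO_F$ so the monodromy condition holds. The paper compresses the verification to a single sentence, and you have simply written out the computation that the paper calls ``straightforward.''
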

\begin{proof}
By uniqueness of $N_{\nabla}^{\cM_0}$, it suffices to confirm that the relation $N_{\nabla}(C) + BC = p(E/c_0)C\varphi(B)$ holds for $B$ and $C=C_0$ (Remark \ref{rmk:monodrom-rmk}). That is straightforward, using \eqref{eq:a0} and \eqref{eqn:a0-derivs}.
\end{proof}

\begin{rmk}
The base change of $\fM_0$ to the unramified quadratic extension of $\Qp$ is the direct sum of two Kisin modules of rank one. The monodromy condition can be checked after unramified base change, and rank one Kisin modules always satisfy the monodromy condition (\cite[Lemma 1.3.10(3)]{Kisin-FCrystals}), so  it is unsurprising that $\frak M_0$ satisfies the monodromy condition.
\end{rmk}

\begin{prop} \label{prop:familyoverR} 
For each $a_p \in F$, the matrix
\begin{equation}\label{eqn:phiap-matrix}
C_{a_p} := \begin{pmatrix} a_p \left(\lambda_- \over \lambda_{++}\right)^{h} & -1 \\ E^h & 0 \end{pmatrix}
\end{equation}
satisfies the monodromy relation \eqref{eq:matmono} with $B$ from Proposition \ref{prop:diagonal}.
\end{prop}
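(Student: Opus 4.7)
The plan is to exploit linearity of the monodromy relation in the matrix of Frobenius. Since the relation $N_{\nabla}(C) + BC = p(E/c_0) C \varphi(B)$ is $F$-linear in $C$ for fixed $B$, and Proposition \ref{prop:diagonal} already establishes that $C_0$ satisfies this relation with the given $B$, it suffices to verify the relation for the ``perturbation''
\[
\Delta C := C_{a_p} - C_0 = a_p \begin{pmatrix} (\lambda_-/\lambda_{++})^{h} & 0 \\ 0 & 0 \end{pmatrix}.
\]

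First, I would note that because $\Delta C$ has a single nonzero entry (in the top-left) and $B$ is diagonal, three of the four matrix entries of the relation $N_{\nabla}(\Delta C) + B\,\Delta C = p(E/c_0)\,\Delta C\,\varphi(B)$ are trivially $0=0$. So the entire verification collapses to a single scalar identity for $\mu := (\lambda_-/\lambda_{++})^{h}$, namely
\[
N_{\nabla}(\mu) + (h u \lambda_+ \lambda_-')\,\mu \;=\; p(E/c_0)\, \mu\, \varphi(h u \lambda_+ \lambda_-').
\]

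Next I would compute both sides using the identities in \eqref{eq:a0} and \eqref{eqn:a0-derivs}. On the right, $\varphi(h u \lambda_+\lambda_-') = h u^p \lambda_-\cdot \tfrac{1}{pu^{p-1}}\lambda_{++}' = \tfrac{hu}{p}\lambda_-\lambda_{++}'$, so the right-hand side simplifies to $hu(E/c_0)\,\mu\,\lambda_-\lambda_{++}'$. On the left, logarithmic differentiation gives $\mu'/\mu = h(\lambda_-'/\lambda_- - \lambda_{++}'/\lambda_{++})$, and then
\[
N_{\nabla}(\mu) = -u\lambda_+\lambda_-\,\mu' = -h u \lambda_+ \lambda_-' \mu + h u\,(\lambda_+\lambda_-/\lambda_{++})\,\mu\,\lambda_{++}'.
\]
The first term cancels with $B_{11}\mu$ on the left, leaving $hu(\lambda_+/\lambda_{++})\lambda_-\mu\lambda_{++}'$. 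Using $\lambda_{++} = (c_0/E)\lambda_+$ from \eqref{eq:a0}, the ratio $\lambda_+/\lambda_{++}$ equals $E/c_0$, matching the right-hand side exactly.

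The only real obstacle is the careful bookkeeping of the chain of identities among $\lambda, \lambda_+, \lambda_-, \lambda_{++}$ and their derivatives under $\varphi$; but once those are laid out as in Proposition \ref{prop:diagonal}, everything is a short direct check. No additional structural input is needed beyond the already-verified case $a_p = 0$ and linearity of \eqref{eq:matmono}.
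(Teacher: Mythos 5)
Your proof is correct, and it takes a genuinely different (and somewhat simpler) route than the paper. You exploit additive linearity: writing $C_{a_p} = C_0 + \Delta C$ with $\Delta C$ concentrated in the $(1,1)$ entry, and observing that $C \mapsto N_\nabla(C) + BC - p(E/c_0)C\varphi(B)$ is $F$-linear in $C$, so the already-verified case $C_0$ reduces everything to a single scalar identity for $\mu = (\lambda_-/\lambda_{++})^h$; that identity falls out from \eqref{eq:a0} and \eqref{eqn:a0-derivs} by logarithmic differentiation. The paper instead uses a multiplicative decomposition $C_{a_p} = C_0 Z$ with $Z = \begin{smallpmatrix}1&0\\-\zeta&1\end{smallpmatrix}$, manipulates the monodromy relation into the form $0 = C_0\bigl(\tfrac{p}{c_0}E[\varphi(B),Z] + N_\nabla(Z)\bigr)$, and then solves the resulting first-order ODE \eqref{eqn:zeta-diffeq} for $\zeta$. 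The payoff of the paper's route is that it establishes the stronger ``if and only if'' statement: the matrices of this shape satisfying \eqref{eq:matmono} with the given $B$ form exactly the one-parameter family $\zeta = a(\lambda_-/\lambda_{++})^h$, $a \in F$. Your argument proves only the ``if'' direction, but that is precisely what Proposition \ref{prop:familyoverR} asserts, so your proof is a complete and clean verification of the stated result; it just does not recover the uniqueness observation that the paper records along the way.
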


\begin{proof}
Consider $\zeta \in R$ and $Z = \begin{smallpmatrix} 1 & 0 \\ -\zeta & 1\end{smallpmatrix}$, so that $C := C_0 Z = \begin{smallpmatrix} \zeta & -1 \\ E^h & 0\end{smallpmatrix}$. We prove the stronger claim that the monodromy relation \eqref{eq:matmono} is satisfied by $C$ and $B$ if and only if $\zeta$ is an $F$-scalar multiple of $(\lambda_-/\lambda_{++})^h$. To see this, first note \eqref{eq:matmono} is equivalent to:
\begin{equation}\label{eqn:matmono-2}
0 = B C + N_{\nabla}(C) - {p\over c_0}E C \varphi(B) =  B C_0Z + C_0N_{\nabla}(Z) + N_{\nabla}(C_0)Z - {p\over c_0} E C_0 Z\varphi(B).
\end{equation}
Let $[-,-]$ be the usual matrix commutator. Then by Proposition \ref{prop:diagonal}, we have \eqref{eqn:matmono-2} is equivalent to
\begin{equation}\label{eqn:matmono-3}
0 = C_0\left({p\over c_0}E [\varphi(B), Z] + N_{\nabla}(Z)\right).
\end{equation}
Since $C_0$ is not a zero divisor in $M_2(F[\![u]\!])$, using \eqref{eqn:a0-derivs} it is straightforward to see \eqref{eqn:matmono-3} is equivalent to $\zeta$ being a solution to the differential equation
\begin{equation}\label{eqn:zeta-diffeq}
hu(E/c_0)(\lambda_{-}\lambda_{++}' - \lambda_{++}\lambda_{-}')\zeta + u\lambda \zeta' = 0.
\end{equation}
Since $\lambda = (E/c_0)\lambda_-\lambda_{++}$, the general solution to \eqref{eqn:zeta-diffeq}, in $F[\![u]\!]$, is given by $\zeta = a \left(\lambda_- /\lambda_{++}\right)^{h}$ with $a \in F$. This completes the proof.
\end{proof}

Let $p^{-2} < r < 1$. By definition of $\lambda_{++}$,  the matrix $C_{a_p}$ in \eqref{eqn:phiap-matrix} has entries in $\mathcal O_{F,[0,p^{-r}]}$. So, we may define $\widetilde{\cM}_{a_p} = \mathcal O_{F,[0,p^{-r}]}^{\oplus 2}$ as a $\varphi$-module (of height $\leq h$) over $\mathcal O_{F,[0,p^{-r}]}$ by declaring $\varphi$ acts in the natural basis of $\widetilde{\cM}_{a_p}$ via the matrix $C_{a_p}$. In this way, we view $\{\widetilde{\cM}_{a_p}\}$ as a family of $\varphi$-modules deforming $\cM_0$. An object $\mathcal M_{a_p} \in \Mod_{\cO_F}^{\varphi,\leq h}$ such that $\mathcal M_{a_p} \otimes_{\mathcal O_F} \mathcal O_{F,[0,p^{-r}]} \cong \widetilde{\cM}_{a_p}$ is called a {\em descent} of $\widetilde{\cM}_{a_p}$ to $\cO_F$. We use similar language to describe descents to $\fS_{\Lambda}$ and $\fS_F$. The purpose of Sections \ref{sec:descent-algorithm} and \ref{sec:application} is to show a descent (to $\fS_{F}$, even!) always exists for $a_p \in \fm_F$ and identify an exact condition on $v_p(a_p)$ under which $\widetilde {\cM}_{a_p}$ further descends to $\fS_{\Lambda}$. For now, we prove just the following result, which connects the family $\{\widetilde{\cM}_{a_p}\}$ to Galois representations.

\begin{thm}\label{thm:descent-monodromy}
Let $p^{-2} < r < 1$ and $\widetilde{\mathcal M}_{a_p}$ be as above. If $a_p \in \fm_F$ and $\cM_{a_p} \in \Mod_{\cO_F}^{\varphi,\leq h}$ is a descent of $\widetilde{\cM}_{a_p}$, then $\cM_{a_p}$ satisfies the monodromy condition, $D(\cM_{a_p})$ is weakly-admissible, and $V_{\cris}^{\ast}(D(\cM_{a_p})) = V_{h+1,a_p}$.
\end{thm}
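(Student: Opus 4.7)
The plan is three steps: verify the monodromy condition using the descent criterion of the previous section, deduce weak admissibility from Kisin's equivalence, and identify $D(\cM_{a_p})$ with $D_{h+1,a_p}$ by computing its Frobenius and Hodge--Tate weights. For the monodromy condition I would apply Corollary \ref{cor:monoverR}. Since $K=\Qp$ has ramification index $e=1$, the hypothesis $r<1$ gives $r\leq 1/e$ as required. Proposition \ref{prop:familyoverR} says that $C_{a_p}$ together with $B$ from Proposition \ref{prop:diagonal} solves the monodromy relation \eqref{eq:matmono}; moreover, the entries of $B$ lie in $\cO_F \subseteq \cO_{F,[0,p^{-r}]}$ and $B|_{u=0}=0$. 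Thus $B$ yields a differential operator $N_{\nabla}^{\widetilde{\cM}_{a_p}}$ on $\widetilde{\cM}_{a_p}$ above $N_{\nabla}$ with $N_{\nabla}^{\widetilde{\cM}_{a_p}}|_{u=0}=0=N_{\cM_{a_p}}$, so condition (b) of Corollary \ref{cor:monoverR} holds. Therefore $\cM_{a_p}$ satisfies the monodromy condition, and by Kisin's equivalence (\cite[Theorem 1.2.5]{Kisin-FCrystals}; cf.\ Theorem \ref{thm:comp}) $D(\cM_{a_p})$ is weakly-admissible.

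Next I would pin down $D(\cM_{a_p})$. Its Frobenius is the image of $C_{a_p}$ modulo $u$. Each factor $\phz^n(E/c_0)$ in the products defining $\lambda_\pm$ and $\lambda_{++}$ evaluates to $E(0)/c_0=1$ at $u=0$, so $(\lambda_-/\lambda_{++})^h|_{u=0}=1$, and $C_{a_p}|_{u=0}=\begin{smallpmatrix} a_p & -1 \\ p^h & 0 \end{smallpmatrix}$ has characteristic polynomial $X^2-a_pX+p^h$. For the Hodge--Tate weights, the cokernel of $\varphi^{\ast}\cM_{a_p}\to\cM_{a_p}$ is computed by adding $a_p(\lambda_-/\lambda_{++})^h$ times the second column of $C_{a_p}$ to the first, which gives $\begin{smallpmatrix} 0 & -1 \\ E^h & 0 \end{smallpmatrix}$ and cokernel $\cO_F/E^h$. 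By the correspondence between the height structure of a $\phz$-module and the Hodge--Tate weights of $D(-)$ built into Kisin's construction of the filtration, the weights of $D(\cM_{a_p})$ are $\{0,h\}$.

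Combining these, $D(\cM_{a_p})$ is a weakly-admissible filtered $\varphi$-module with Frobenius characteristic polynomial $X^2-a_pX+p^h$ and Hodge--Tate weights $\{0,h\}$. The uniqueness of $D_{h+1,a_p}$ stated at the start of this section then forces $D(\cM_{a_p}) \cong D_{h+1,a_p}$, and applying $V_{\cris}^{\ast}$ yields $V_{\cris}^{\ast}(D(\cM_{a_p}))=V_{h+1,a_p}$. The most delicate step is the first: checking the monodromy condition. But essentially all the hard work is already done by Propositions \ref{prop:diagonal} and \ref{prop:familyoverR} together with Corollary \ref{cor:monoverR}, so the final argument amounts to assembling these pieces.
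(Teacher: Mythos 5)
Your treatment of the monodromy condition is correct and matches the paper: $r<1 = 1/e$ allows Corollary \ref{cor:monoverR}, Proposition \ref{prop:familyoverR} provides the $N_{\nabla}$-operator on $\widetilde{\cM}_{a_p}$, and $B|_{u=0}=0$ matches $N_{\cM}=0$. The computations of $C_{a_p}|_{u=0}$ and of the Smith normal form $\mathrm{diag}(1,E^h)$ are also correct.

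The gap is in the weak-admissibility step. You write ``by Kisin's equivalence (\cite[Theorem 1.2.5]{Kisin-FCrystals}; cf.\ Theorem \ref{thm:comp}) $D(\cM_{a_p})$ is weakly-admissible,'' but neither cited result gives this. Kisin's Theorem 1.2.5 is an equivalence with \emph{effective} filtered $\varphi$-modules, not weakly-admissible ones; satisfying the monodromy condition over $\cO_F$ alone does not force weak admissibility. Theorem \ref{thm:comp} does conclude weak admissibility, but only under the hypothesis that the Kisin module is defined over $\fS_\Lambda$, i.e., has an integral descent — and the present theorem is stated for a descent to $\cO_F$ only. (The paper's proof explicitly flags this: weak admissibility ``would be automatic by Theorem \ref{thm:comp}'' \emph{if} $\cM_{a_p}$ descended to $\fS_\Lambda$, but that is not assumed here.)

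Moreover, ``char poly $X^2-a_pX+p^h$ plus HT weights $\{0,h\}$'' does not by itself determine the filtered module: there is also the non-weakly-admissible module whose nontrivial filtration line is $\varphi$-stable. To close the gap you must either prove weak admissibility directly, or — as the paper does — explicitly compute the filtration from Kisin's construction \cite[(1.2.7)]{Kisin-FCrystals} and show the nontrivial line $F\overline{x_2}$ satisfies $\varphi(\overline{x_2}) = -p^h\overline{x_1} \notin F\overline{x_2}$, hence is not $\varphi$-stable. That identification with $D_{h+1,a_p}$ then gives weak admissibility as a consequence, not as an input.
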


\begin{proof}
Let $\cM=\cM_{a_p}$ be as in the statement. By Corollary \ref{cor:monoverR} and Proposition \ref{prop:familyoverR}, $\cM$ satisfies the monodromy condition. 

In order to justify the weak-admissibility of $D(\cM)$, we will have to explicitly calculate the filtration on $D(\mathcal M)$ as defined in \cite[(1.2.7)]{Kisin-FCrystals}. This is not so difficult, but we would like to mention that if $\cM_{a_p}$ were to descend to $\fS_{\Lambda}$, which is the most interesting case for us, then the weak-admissibility is automatic by Theorem \ref{thm:comp}.

Let $s$ be such that $p^{-1} < s < 1$ and $r \leq s$. Write $\cO_s = \cO_{F,[0,p^{-s})}$. Then $\mathcal O_{F,[0,p^{-r}]} \subseteq \cO_s$, so $\cM_{s} := \cM_{a_p}\otimes_{\cO_F} \cO_s$ has a basis $\{e_1,e_2\}$ in which $\varphi$ acts via $C_{a_p}$ in Proposition \ref{prop:familyoverR}. In particular,
\begin{equation*}
(1\otimes \varphi)(\varphi^{\ast}\cM_s) = \cO_s e_1 \oplus \cO_s E^h e_2 \subseteq \cM_s.
\end{equation*}
The left-hand side is equipped with a decreasing filtration, which in degrees $i \geq 0$ is given by
$$
\Fil^{i}\left((1\otimes \varphi)(\varphi^{\ast}\cM_s)\right) := (1\otimes \varphi)(\varphi^{\ast}\cM_s) \cap E^i \cM_s = \cO_s E^i e_1 \oplus \cO_s E^{\max\{h,i\}} e_2.
$$
Write $\xi: D(\cM)\otimes_{F} \cO_F \rightarrow \cM
$  for the map from \cite[Lemma 1.2.6]{Kisin-FCrystals}. Thus $\xi$ is injective, $\varphi$-equivariant, and the induced map $\xi_s : D(\cM)\otimes_F \cO_s \rightarrow \cM_s$ defines an isomorphism $\xi_s: D(\cM)\otimes_F \cO_s \cong (1\otimes \varphi)(\varphi^{\ast}\cM_s)$, inducing a filtration on $D(\cM) \otimes_F \cO_s$. Explicitly, if we choose $x_i \in D(\cM)\otimes_F \cO_s$ such that $\xi_s(x_1) = e_1$ and $\xi_s(x_2) = E^he_2$ then
\begin{equation}\label{eqn:filtrationDotimes}
\Fil^i(D(\cM)\otimes_F \cO_s) = \begin{cases}
D(\cM)\otimes_F \cO_s & \text{if $i \leq 0$;}\\
\cO_s E^i x_1 \oplus \cO_s x_2 & \text{if $1\leq i \leq h$;}\\
\cO_s E^i x_1 \oplus \cO_s E^{i-h} x_2 & \text{if $i > h$}.
\end{cases}
\end{equation}
The filtration $\Fil^i D(\cM)$ is then defined to be the image of $\Fil^i(D(\cM)\otimes_F \cO_s)$ under the map
$$
D(\cM)\otimes_F \cO_s \rightarrow D(\cM)\otimes_F \cO_s/E\cO_s \cong D(\cM).
$$
Write $\overline{x}\in D(\cM)$ for the image of $x \in D(\cM)\otimes_F \cO_s$ under the previous map. From \eqref{eqn:filtrationDotimes} we have
$$
\Fil^iD(\cM) = \begin{cases}
D(\cM) & \text{if $i \leq 0$;}\\
F \overline{x_2} & \text{if $1 \leq i \leq h$;}\\
(0) & \text{if $i > h$}.
\end{cases}
$$
Since $\xi$ is injective and commutes with $\varphi$, we have $\varphi(x_2) = -\varphi(E^h) x_1$. In particular,  the non-trivial line $F\overline{x_2}$ in the filtration on $D(\cM)$ is not $\varphi$-stable. Since $\varphi$ acting on $D(\cM)$ has characteristic polynomial $X^2 - a_p X + p^h$, it follows that $D(\cM)$ is weakly-admissible and $D(\cM) \cong D_{h+1,a_p}$. (One could also use Remark \ref{rmk:basis-switch}.) The final claim, that $V_{\cris}^{\ast}(D(\cM)) \cong V_{h+1,a_p}$, now follows from the discussion at the start of this section.
\end{proof}
\section{Descent algorithm}\label{sec:descent-algorithm}
The goal of this section is to explain an algorithm for descending from $R = \cO_{F, [0,p^{-r}]}$ to $\fS_F$.  The algorithm specifically will allow us to descend the $\phz$-module $\widetilde{\cM}_{a_p}$ defined at the end of Section \ref{sec:family-2d} to $\fS_F$, when $a_p \in \fm_F$, and even to $\fS_{\Lambda}$ when $v_p(a_p) \gg 0$.  It proceeds via ``row reduction'' for semilinear operators and is inspired by related processes that appear in \cite{CarusoDavidMezard-Calculation} and \cite[\S 4]{LLLM-ShapesShadows}.  In those settings, an integral structure of the attendant $\varphi$-modules is a given. The novelty here is that we begin over the larger ring $R$ where $p$ is inverted.  In order to arrive at a descent defined over $\fS_{\Lambda}$ (and thus calculate reductions of Galois representations; cf.\ Corollary \ref{cor:reduction}), we need to make a number of careful estimates as the algorithm is carried out, and we have thus chosen to present the algorithm in a generality where those estimates are most clear. It may also be helpful for future applications.

\subsection{Notations}\label{subsec:notations}

Choose $m > 1$ and write
\begin{equation*}
R = \mathcal O_{F,[0,p^{-1/m}]} = \left\{ f = \sum a_i u^i \in F[\![u]\!] \mid i + mv_p(a_i) \rightarrow \infty \text{ as $i \rightarrow \infty$}\right\}.
\end{equation*}
We equip $R$ with the valuation 
\begin{equation*}
v_R(f) = \min_i \{i + mv_p(a_i)\},
\end{equation*}
 which induces on $R$ the structure of an $F$-Banach algebra (\cite[Proposition 6.1.5/1]{BGR}). In particular, $R$ is complete for the $v_R$-adic topology. If $v$ is a real number, we define 
$$
H_v = \{f \in R \mid v_R(f) \geq v\}.
$$
Thus $H_v \subseteq R$ is an additive subgroup and $H_{v}H_{w} \subseteq H_{v+w}$ for any $v,w$. For $C \in M_2(R)$, if $C = (c_{ij})$ then we also define $v_R(C) = \min\{v_R(c_{ij})\}$. More specifically, we will also write
\begin{equation*}
C \in \begin{pmatrix} H_{v_{11}} & H_{v_{12}}\\ H_{v_{21}} & H_{v_{22}} \end{pmatrix}
\end{equation*}
with the obvious meaning. If we replace $H_{v_{ij}}$ by an asterisk $\ast$, then we mean no condition {\em a priori}.

We record the following interaction between  $v_R(-)$ and the Frobenius operator $\varphi: R \rightarrow R$.

\begin{lemma}\label{lem:frobenius-increase}
If $f \in H_v \cap u^jR$, then $\varphi(f) \in H_{j(p-1)+v} \cap u^{pj}R$.
\end{lemma}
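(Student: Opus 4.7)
The plan is a direct term-by-term computation from the definition of $v_R$ combined with the explicit form of $\varphi$ on $R$. I would expand $f = \sum_{i \geq j} a_i u^i$ with coefficients $a_i$ in the coefficient ring ($F$, or in the more general setup of Section \ref{sec:kisinmodules}, $K_0 \otimes_{\Qp} F$); the lower bound on the index range encodes $f \in u^j R$, while $f \in H_v$ translates to $i + m v_p(a_i) \geq v$ for every $i \geq j$, equivalently $m v_p(a_i) \geq v - i$.

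Next, since $\varphi$ sends $u$ to $u^p$ and acts on coefficients either as the identity on $F$ or as a Frobenius lift on the $K_0$-factor (in either case an isometry for $v_p$), one has $\varphi(f) = \sum_{i \geq j} \varphi(a_i) u^{pi}$ with $v_p(\varphi(a_i)) = v_p(a_i)$. The containment $\varphi(f) \in u^{pj} R$ is immediate from the index range. For the valuation bound, I would estimate term-by-term:
$$v_R(\varphi(f)) \;=\; \min_{i \geq j} \bigl\{ p i + m v_p(a_i) \bigr\} \;\geq\; \min_{i \geq j} \bigl\{ p i + (v - i) \bigr\} \;=\; \min_{i \geq j} \bigl\{ (p-1) i + v \bigr\} \;=\; (p-1) j + v,$$
where the last equality uses $(p-1)(i - j) \geq 0$ for $i \geq j$.

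There is no genuine obstacle. Once the definitions are unpacked, the assertion reduces to the elementary inequality $p i + m v_p(a_i) \geq (p-1) j + (i + m v_p(a_i))$, together with the fact that the Frobenius on the coefficient ring preserves $v_p$. The two conclusions — the $u^{pj}R$-divisibility and the $H_{(p-1)j + v}$-bound — come out simultaneously from the same expansion, with no need to invoke the Banach structure of $R$ beyond term-by-term comparison.
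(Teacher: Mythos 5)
Your proof is correct and is essentially the same as the paper's: both are term-by-term estimates using $pi + mv_p(a_i) \geq (p-1)j + (i + mv_p(a_i))$ for $i \geq j$. The only cosmetic difference is that the paper first factors $f = u^j g$, proves $v_R(\varphi(g)) \geq v_R(g)$, and then adds $pj$, whereas you carry the shift $i \geq j$ through a single minimum; the content is identical.
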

\begin{proof}
Write $g = \sum a_i u^i$ so that $\varphi(g) = \sum a_i u^{ip}$. Then,
\begin{equation*}
v_R(\varphi(g)) = \inf_{i \geq 0} \left\{ip  + mv_p(a_{i})\right\} \geq \inf_{i\geq 0} \{i + mv_p(a_i)\} = v_R(g).
\end{equation*}
If $f = u^j g$, so that $\varphi(f) = u^{pj}\varphi(g)$, then
\begin{equation*}
v_R(\varphi(f)) = pj + v_R(\varphi(g)) \geq pj + v_R(g) = pj - j + v_R(f).
\end{equation*}
This completes the proof.
\end{proof}

For each $n \geq 0$, we define a truncation operator
\begin{align*}
T_{\leq n} : F[\![u]\!] &\longrightarrow F[u]\\
T_{\leq n}\left(\sum_{i=0}^{\infty} a_i u^i\right) &= \sum_{i=0}^n a_iu^i.
\end{align*}
We will use  analogous notations $T_{<n}$, $T_{\geq n}$, $T_{>n}$ for truncation of different types. We will frequently use that $T_{\ast}(H_v) \subseteq H_v$ for any truncation operator $T_{\ast}$ and any $v$.

\subsection{Analysis of certain row operations}\label{subsec:analysis-operations}
For this subsection, we fix non-negative integers $q,r,s,t$. Given $C\in M_2(R)$, we write
$$
T(C) = \begin{pmatrix} T_{\leq q}(c_{11}) & T_{\leq r}(c_{12}) \\ T_{\leq s}(c_{21}) & T_{\leq t}(c_{22}) \end{pmatrix}
$$
and define $(e_{ij}) = E(C) = C - T(C)$. Our goal is to study the behavior of $C\mapsto T(C)$ and $C \mapsto E(C)$ under certain operations of the form $A \ast_{\varphi} C := A C \varphi(A)^{-1}$ for $A \in \GL_2(R)$. We begin with a lemma.

\begin{lemma}\label{lemma:D-lemma}
Let $D \in M_2(R)$ be such that $D \in \begin{pmatrix} H_{r'+\gamma} & H_{r+\gamma} \\ H_{s+\gamma} & H_{s'+\gamma}\end{pmatrix}$ where $\gamma > 0$ and $r',s' \in \mathbb Q$ such that $r'+s' = r+s$.
\begin{enumerate}[label=(\alph*)]
\item If $n$ is a non-negative integer such that $n(p-1)+r'-s' \geq 0$ and $f \in H_{r'-s+\gamma'} \cap u^nR$, with $\gamma ' > 0$, then
$$
\begin{pmatrix} 1 & -f \\ 0 & 1 \end{pmatrix} \ast_{\varphi} D  - D \in \begin{pmatrix} H_{r'+\gamma+\gamma'} & H_{r+\gamma+\gamma'}\\ 0 & H_{r'+\gamma + n(p-1)+\gamma'} \end{pmatrix} \subseteq \begin{pmatrix} H_{r'+\gamma+\gamma'} & H_{r+\gamma+\gamma'}\\ 0 & H_{s'+\gamma+\gamma'} \end{pmatrix} .
$$\label{lemma-part:D-row}
\item If $g \in H_{\gamma'} \cap uR$, with $\gamma' > 0$, then
\begin{equation*}
\begin{pmatrix} 1 - g & 0 \\ 0 & 1 \end{pmatrix} \ast_{\varphi} D - D \in \begin{pmatrix} H_{r'+\gamma + \gamma'} & H_{r+\gamma + \gamma'}\\ H_{s+\gamma + \gamma' + p-1} & 0 \end{pmatrix}.
\end{equation*}\label{lemma-part:D-scale}
\end{enumerate}
\end{lemma}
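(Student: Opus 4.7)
The plan is to carry out both parts by explicit $2\times 2$ matrix multiplication and then read off the valuation of each entry using the hypotheses on $D$, $f$, $g$ combined with Lemma \ref{lem:frobenius-increase} to control Frobenius twists. The proof is pure bookkeeping; there is no conceptual obstacle.

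For part (a), set $A = \begin{pmatrix} 1 & -f \\ 0 & 1 \end{pmatrix}$, so $\varphi(A)^{-1} = \begin{pmatrix} 1 & \varphi(f) \\ 0 & 1 \end{pmatrix}$. A direct expansion gives
\[
A D \varphi(A)^{-1} - D = \begin{pmatrix} -fd_{21} & (d_{11}-fd_{21})\varphi(f) - fd_{22} \\ 0 & d_{21}\varphi(f) \end{pmatrix}.
\]
Lemma \ref{lem:frobenius-increase} applied to $f \in H_{r'-s+\gamma'} \cap u^{n}R$ gives $\varphi(f) \in H_{n(p-1)+r'-s+\gamma'}$, from which the $(1,1)$ and $(2,2)$ entries immediately satisfy the claimed bounds. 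The final containment $H_{r'+\gamma+n(p-1)+\gamma'} \subseteq H_{s'+\gamma+\gamma'}$ is exactly the hypothesis $n(p-1)+r'-s' \geq 0$.

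The $(1,2)$-entry is the only place that requires care. The term $-fd_{22}$ lands in $H_{r'-s+\gamma'+s'+\gamma}$, and the balance $r'+s' = r+s$ rewrites this as $H_{r+\gamma+\gamma'}$. The term $d_{11}\varphi(f)$ lands in $H_{r'+\gamma+n(p-1)+r'-s+\gamma'}$; after applying $r'+s' = r+s$ the required inclusion into $H_{r+\gamma+\gamma'}$ becomes $n(p-1)+r'-s' \geq 0$, again the hypothesis on $n$. The cross-term $fd_{21}\varphi(f)$ is strictly better, so the entire $(1,2)$-entry sits in $H_{r+\gamma+\gamma'}$.

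For part (b), I would first check that $1-\varphi(g)$ is a unit in $R$. Since $g \in uR \cap H_{\gamma'}$, Lemma \ref{lem:frobenius-increase} yields $\varphi(g) \in u^{p}R \cap H_{(p-1)+\gamma'}$, which has strictly positive $v_R$-valuation, so the geometric series $(1-\varphi(g))^{-1} = \sum_{k\geq 0}\varphi(g)^k$ converges in $R$ with $v_R$-valuation $0$. Expanding $A D \varphi(A)^{-1} - D$ then yields a matrix with vanishing $(2,2)$-entry and remaining entries $d_{11}(\varphi(g)-g)(1-\varphi(g))^{-1}$, $-gd_{12}$, and $d_{21}\varphi(g)(1-\varphi(g))^{-1}$, respectively. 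The three valuation bounds follow by direct multiplication: the $(1,1)$-entry uses $\varphi(g)-g \in H_{\gamma'}$; the $(1,2)$-entry uses $g \in H_{\gamma'}$; and the $(2,1)$-entry uses the sharper $\varphi(g) \in H_{(p-1)+\gamma'}$, which produces the extra $p-1$ that appears in the statement. The only care needed is to track which hypothesis ($r+s = r'+s'$ or $n(p-1)+r'-s' \geq 0$) is being invoked at each step in part (a); the rest is direct.
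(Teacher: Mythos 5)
Your proof is correct and follows essentially the same route as the paper: both are direct matrix computations that read off the valuation of each entry of $A \ast_\varphi D - D$ using Lemma~\ref{lem:frobenius-increase}, the balance relation $r'+s'=r+s$, and the hypothesis $n(p-1)+r'-s'\geq 0$, at the same places. The only cosmetic difference is that you write out the full $2\times 2$ difference matrix in one step, whereas the paper first factors it as a sum of two matrix products and then bounds each summand.
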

\begin{proof}
First, since $f \in H_{r'-s+\gamma'}$, we have $f H_{s+\gamma} \subseteq H_{r'+\gamma+\gamma'}$, and $fH_{s'+\gamma} \subseteq H_{r+\gamma + \gamma'}$, the latter because $r'+s' = r+s$. Further, $\varphi(f) \in H_{r'-s+\gamma'+n(p-1)}$ by Lemma \ref{lem:frobenius-increase}. Since $r' + n(p-1) \geq s'$ we deduce $\varphi(f) \in H_{s'-s+\gamma'}$. So,
$$
\begin{pmatrix} 0 & -f\\ 0 & 0 \end{pmatrix}D\begin{pmatrix} 1 & \varphi(f) \\ 0 & 1 \end{pmatrix} \in \begin{pmatrix} H_{r'+\gamma + \gamma'} & H_{r+\gamma+\gamma'}\\ 0 & 0 \end{pmatrix}\begin{pmatrix} H_0 & H_{s'-s+\gamma'} \\ 0 & H_0 \end{pmatrix} \subseteq \begin{pmatrix} H_{r'+\gamma+\gamma'} & H_{r+\gamma+\gamma'}\\ 0 & 0 \end{pmatrix}.
$$
Returning to $\varphi(f) \in H_{r'-s+\gamma'+n(p-1)}$, it follows that
$$
D\begin{pmatrix}
0 & \varphi(f) \\ 0 & 0
\end{pmatrix} \in \begin{pmatrix} 0 & H_{2r'-s+\gamma+\gamma'+n(p-1)}\\0 & H_{r'+\gamma+\gamma'+n(p-1)}\end{pmatrix} \subseteq \begin{pmatrix} 0 & H_{r+\gamma+\gamma'}\\0 & H_{s'+\gamma+\gamma'}\end{pmatrix}.
$$
The containment \ref{lemma-part:D-row} now follows because
\begin{equation}\label{eqn:fconj-expression}
\begin{pmatrix} 1 & -f \\ 0 & 1 \end{pmatrix} \ast_{\varphi} D - D = \begin{pmatrix} 0 & -f\\ 0 & 0 \end{pmatrix}D \begin{pmatrix} 1 & \varphi(f) \\ 0 & 1 \end{pmatrix} + D\begin{pmatrix} 0 & \varphi(f) \\ 0 & 0 \end{pmatrix}.
\end{equation}

For \ref{lemma-part:D-scale}, the conjugation is first well-defined because $g \in H_{\gamma'}$ and $\gamma' > 0$. Moreover, $(1-\varphi(g))^{-1} = 1 + h$ where $h \in H_{p-1+\gamma'}$ (Lemma \ref{lem:frobenius-increase}). Then, the proof is as in \ref{lemma-part:D-row} except using 
\begin{equation*}
\begin{pmatrix} 1 - g & 0 \\ 0 & 1 \end{pmatrix} \ast_{\varphi} D - D = \begin{pmatrix} -g & 0 \\ 0 & 0 \end{pmatrix} D \begin{pmatrix} 1+h & 0 \\ 0 & 1 \end{pmatrix} + D \begin{pmatrix} h & 0 \\ 0 & 0 \end{pmatrix}
\end{equation*}
rather than \eqref{eqn:fconj-expression}.
\end{proof}

\begin{prop}\label{prop:operation-analysis}
Suppose that $\gamma > 0$, $c_r,c_s \in \Lambda^\times$ and $C\in M_2(R)$ such that
\begin{equation*}
C \in \begin{pmatrix} 0 & c_ru^r \\ c_su^s & 0 \end{pmatrix} + \begin{pmatrix} H_{r'+\gamma} & H_{r+\gamma} \\ H_{s+\gamma} & H_{s'+\gamma} \end{pmatrix}
\end{equation*}
where $r',s' \in \mathbb Q$ and $r'+s' = r+s$.

\begin{enumerate}[label=(\alph*)]
\item Assume $q \geq s + \max\{0,\lceil{{s'-r'\over p-1}\rceil}-1\}$ and let $n = q-s+1$. Set $v = v_R(e_{11})$. Then,
$$
\rho(C) = \begin{pmatrix} 1 & -e_{11}/c_su^s \\ 0 & 1 \end{pmatrix} \ast_{\varphi} C
$$
satisfies the following:
\begin{enumerate}[label=(\roman*)]
\item $\rho(C) \in  \begin{pmatrix} 0 & c_ru^r \\  c_su^s & 0 \end{pmatrix} + \begin{pmatrix} H_{r'+\gamma} & H_{r+\gamma} \\ H_{s+\gamma} & H_{s'+\gamma}\end{pmatrix}$;\label{lem:row-induction}
\item $T(\rho(C)) - T(C) \in \begin{pmatrix} H_{v+ \gamma} & \ast \\ \ast & H_{v + n(p-1)}\end{pmatrix}$;\label{lem:row-truncation}
\item $E(\rho(C)) \in \begin{pmatrix} H_{v+\gamma} & \ast \\ \ast & \ast \end{pmatrix} \cap \left(E(C) + \begin{pmatrix} \ast & H_{v+\gamma +r-r'} \\ 0 & H_{v+n(p-1)} \end{pmatrix} \right)$.\label{lem:row-error}
\end{enumerate}
\label{prop-part:row}
\item Set $v = v_R(e_{12})$. Then,
$$
\sigma(C) = \begin{pmatrix} 1 -e_{12}/c_ru^r& 0 \\ 0 & 1 \end{pmatrix} \ast_{\varphi} C
$$
satisfies the following:
\begin{enumerate}[label=(\roman*)]
\item $\sigma(C) \in  \begin{pmatrix} 0 & c_r u^r \\ c_s u^s & 0 \end{pmatrix} + \begin{pmatrix} H_{r'+\gamma} & H_{r+\gamma} \\ H_{s+\gamma} & H_{s'+\gamma}\end{pmatrix}$;\label{lem:scaling-induction}
\item $T(\sigma(C)) - T(C) \in \begin{pmatrix} H_{v+ \gamma+r'-r} & \ast \\ \ast & 0\end{pmatrix}$;\label{lem:scaling-truncation}
\item $E(\sigma(C)) \in \begin{pmatrix} \ast & H_{v+\gamma} \\ \ast & \ast \end{pmatrix} \cap \left( E(C) + \begin{pmatrix}  H_{v+\gamma+r'-r} & \ast \\ H_{v+p-1 + s-r} & 0 \end{pmatrix} \right)$.\label{lem:scaling-error}
\end{enumerate}\label{prop-part:scale}
\end{enumerate}
\end{prop}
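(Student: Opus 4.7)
The key move in both parts is to decompose $C = L + D$, where $L = \begin{smallpmatrix} 0 & c_r u^r \\ c_s u^s & 0 \end{smallpmatrix}$ is the leading antidiagonal and $D \in \begin{smallpmatrix} H_{r'+\gamma} & H_{r+\gamma} \\ H_{s+\gamma} & H_{s'+\gamma}\end{smallpmatrix}$ is the error. Since $A \ast_{\varphi} (-)$ is additive in its second argument,
\[
\rho(C) - L = (A \ast_{\varphi} L - L) \;+\; D \;+\; (A \ast_{\varphi} D - D),
\]
and similarly for $\sigma(C)$. The final parenthetical will be controlled by Lemma \ref{lemma:D-lemma}, while $A \ast_{\varphi} L - L$ is a direct $2 \times 2$ matrix computation with the explicit $A$ in hand.

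For (a), take $A = \begin{smallpmatrix} 1 & -f \\ 0 & 1 \end{smallpmatrix}$ with $f = e_{11}/(c_s u^s)$. Because $e_{11}$ is the $u$-degree $>q$ portion of $c_{11} \in H_{r'+\gamma}$ and $c_s \in \Lambda^\times$, one has $f \in u^n R \cap H_{v-s}$ with $n = q - s + 1$ and $v := v_R(e_{11}) \geq r' + \gamma$. Setting $\gamma' := v - r' \geq \gamma > 0$ places $f$ in the hypothesis of Lemma \ref{lemma:D-lemma}(a), and the assumed lower bound on $q$ is calibrated precisely so that $n(p-1) \geq s' - r'$, the remaining hypothesis of that lemma. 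A direct computation gives
\[
A \ast_{\varphi} L - L \;=\; \begin{pmatrix} -e_{11} & -e_{11}\, \varphi(f) \\ 0 & c_s u^s \varphi(f) \end{pmatrix},
\]
whose $(1,1)$ entry cancels the $e_{11}$ inside $D_{11} = c_{11}$, leaving $T_{\leq q}(c_{11})$. Combining this identity with the estimate on $A \ast_{\varphi} D - D$ from Lemma \ref{lemma:D-lemma}(a) and the Frobenius gain from Lemma \ref{lem:frobenius-increase} reads off (i) position by position; in particular, the inequality $n(p-1) \geq s' - r'$ is exactly what pushes $c_s u^s\varphi(f)$ into $H_{s'+\gamma}$.

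For (ii) and (iii) one exploits the linearity of $T$ and $E$ to write $T(\rho(C)) - T(C) = T(\rho(C) - C)$ and $E(\rho(C)) - E(C) = E(\rho(C) - C)$, then inserts $\rho(C) - C = (A \ast_\varphi L - L) + (A \ast_\varphi D - D)$ found above. Part (b) follows the same template using $g = e_{12}/(c_r u^r)$ and Lemma \ref{lemma:D-lemma}(b); the analogous direct calculation shows $A \ast_{\varphi} L - L$ has $(1,2)$ entry $-e_{12}$ and $(2,1)$ entry $c_s u^s \varphi(g)(1 - \varphi(g))^{-1}$, the latter lying in $H_{s + \gamma' + p - 1}$ by Lemma \ref{lem:frobenius-increase} and accounting for the extra $u^{p-1}$ factor appearing in (b)(iii). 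The only real obstacle is bookkeeping: matching the valuation and $u$-degree estimates in each of the four positions to the bounds in (i)--(iii), where the hypothesis $r + s = r' + s'$ is used repeatedly to swap $H_{r+\bullet}$ for $H_{r'+\bullet}$ (and similarly for $s,s'$) without loss.
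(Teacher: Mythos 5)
Your plan matches the paper's proof step for step: both split $C = L + D$ with $L$ the leading antidiagonal, write $\rho(C) = L + (A\ast_\varphi L - L) + D + (D'-D)$ (and likewise $\sigma$), use Lemma \ref{lemma:D-lemma} for the $D'-D$ term and the explicit $2\times 2$ computation of $A\ast_\varphi L - L$ (yielding exactly the matrices $\begin{smallpmatrix}-e_{11}&-e_{11}\varphi(f)\\0&c_su^s\varphi(f)\end{smallpmatrix}$ and $\begin{smallpmatrix}0&-e_{12}\\c_su^sh&0\end{smallpmatrix}$ with $h=(1-\varphi(g))^{-1}-1$), with Lemma \ref{lem:frobenius-increase} supplying the $n(p-1)$ and $p-1$ valuation gains. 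The remaining assembly of (i)--(iii) is routine entry-by-entry bookkeeping, as you note, so the proposal is essentially the same argument as in the paper.
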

\begin{proof}
The proof of either part is similar. We give complete details for \ref{prop-part:row} and less for \ref{prop-part:scale}.

Recall $e_{11} = T_{>q}(c_{11})$ and so $\rho(C)$ is well-defined because $q \geq s$. In fact, $e_{11} \in u^{q+1}R = u^{s+n}R$. For notation, let $f = e_{11}/c_su^s$. Since $v = v_R(e_{11}) \geq r'+\gamma > r'$, we can write $v = r' + \gamma'$ with $\gamma' \geq \gamma > 0$. Then, we have $f \in H_{v-s} \cap u^n R = H_{r'+\gamma'-s} \cap u^nR$ (remember $c_s$ is a constant unit). Since $n \geq \lceil{{s'-r'\over p-1}\rceil}$, we have $n(p-1) \geq s'-r'$. So, we are in position to apply Lemma \ref{lemma:D-lemma}\ref{lemma-part:D-row}.

Now write $C = \begin{smallpmatrix} 0 & c_ru^r \\ c_su^s & 0 \end{smallpmatrix} + D$ so that $D \in \begin{smallpmatrix} H_{r'+\gamma} & H_{r+\gamma} \\ H_{s+\gamma} & H_{s'+\gamma}\end{smallpmatrix}$. Writing $D' = \begin{smallpmatrix} 1 & -f \\ 0 & 1 \end{smallpmatrix} \ast_{\varphi} D$, we have
\begin{equation}\label{eqn:expandC'-row}
\rho(C) = 
\begin{pmatrix}
0 & c_ru^r \\ c_su^s & 0
\end{pmatrix}
+
\begin{pmatrix}
-e_{11} & -e_{11}\varphi(f) \\ 0 & c_su^s \varphi(f)
\end{pmatrix}
+ D + (D'-D)
\end{equation}
and Lemma \ref{lemma:D-lemma}\ref{lemma-part:D-row} implies that, because $r+\gamma + \gamma' = v + \gamma + r-r'$,
\begin{equation}\label{eqn:D-lemma-cons-row}
D'-D \in \begin{pmatrix} H_{v+\gamma} & H_{v+\gamma+r-r'}\\ 0 & H_{v+\gamma + n(p-1)} \end{pmatrix} \subseteq \begin{pmatrix} H_{r'+\gamma} & H_{r+\gamma}\\ 0 & H_{s'+\gamma}\end{pmatrix}.
\end{equation}
Moreover, $\varphi(f) \in H_{v-s+n(p-1)} \subseteq H_{s'-s+\gamma'}$ (by Lemma \ref{lem:frobenius-increase}) and so, since $\gamma'\geq \gamma$ and $s'-s = r-r'$, we have
\begin{equation}\label{eqn:expandC'-middle-row}
\begin{pmatrix}
-e_{11} & -e_{11}\varphi(f) \\ 0 & c_su^s \varphi(f)
\end{pmatrix} \in \begin{pmatrix} H_{v} & H_{v+\gamma + r-r'}\\ 0 & H_{v+n(p-1)} \end{pmatrix} \subseteq \begin{pmatrix} H_{r'+\gamma} & H_{r+\gamma} \\ 0 & H_{s'+\gamma} \end{pmatrix}.
\end{equation}
Thus, \ref{lem:row-induction} follows from \eqref{eqn:expandC'-row}, \eqref{eqn:D-lemma-cons-row}, \eqref{eqn:expandC'-middle-row}, and the assumption on $D$. Since $T(C) = \begin{smallpmatrix} 0 & c_ru^r \\ c_su^s & 0 \end{smallpmatrix} + T(D)$, from \eqref{eqn:expandC'-row} we see that
\begin{equation*}
T(\rho(C)) - T(C) = \begin{pmatrix} 0 & \ast \\ \ast & T_{\leq t}(c_su^s \varphi(f)) \end{pmatrix} + T(D'-D) \in \begin{pmatrix} H_{v+\gamma} & \ast \\ \ast & H_{v+n(p-1)}\end{pmatrix}
\end{equation*}
by \eqref{eqn:D-lemma-cons-row} and our previous estimate $\varphi(f) \in H_{v-s+n(p-1)}$. This proves conclusion \ref{lem:row-truncation}. Finally, note that $E(C) = E(D) = \begin{smallpmatrix} e_{11} & \ast \\ \ast & \ast\end{smallpmatrix}$. Thus we see, applying $E(-)$ to \eqref{eqn:expandC'-row}, that
\begin{equation*}
E(\rho(C)) \in\begin{pmatrix} 0 & \ast \\ \ast & \ast \end{pmatrix} + E(D-D') \in \begin{pmatrix} H_{v+\gamma} & \ast \\ \ast & \ast \end{pmatrix}.
\end{equation*}
This proves half of \ref{lem:scaling-error}, while 
$$
E(\rho(C)) \in E(C) + \begin{pmatrix}  \ast & H_{v+\gamma+r-r'} \\ 0 & H_{v+n(p-1)} \end{pmatrix}.
$$
follows from \eqref{eqn:D-lemma-cons-row} and \eqref{eqn:expandC'-middle-row}.

For part \ref{prop-part:scale}, let $g = e_{12}/c_ru^r \in H_{v-r} \cap uR = H_{\gamma'} \cap uR$, with $\gamma'=v-r \geq \gamma >0$. Define $h$ by $(1-\varphi(g))^{-1} = 1 + h$ as in the proof of Lemma \ref{lemma:D-lemma}\ref{lemma-part:D-scale}. Writing $C = \begin{smallpmatrix} 0 &  c_ru^r \\ c_su^s & 0 \end{smallpmatrix} + D$ and $D' = \begin{smallpmatrix} 1-g & 0 \\ 0 & 1 \end{smallpmatrix} \ast_{\varphi} D$, we have
\begin{equation}
\sigma(C) = \begin{pmatrix} 0 & c_ru^r \\ c_su^s & 0 \end{pmatrix} + \begin{pmatrix} 0 & -e_{12} \\ c_su^s h &  0 \end{pmatrix} + D + (D'-D).
\end{equation}
By assumption, $h \in H_{v+p-1-r}$. Since $v \geq r+\gamma$, we have $c_su^s h  \in H_{v+p-1+s-r} \subseteq H_{p-1+s + \gamma}$. So, part \ref{lem:scaling-induction} follows, using Lemma \ref{lemma:D-lemma}\ref{lemma-part:D-scale}. Statement \ref{lem:scaling-truncation} is trivial from the same lemma and that $r'+\gamma + \gamma' = v+\gamma + r'-r$. For \ref{lem:scaling-error}, the argument is as above.
\end{proof}

\subsection{Allowed operations and the descent theorem}\label{subsec:allowed-operations}
The previous subsection concerned two elementary operations, $\rho(-)$ and $\sigma(-)$, defined on $M_2(R)$. Here we apply that analysis to produce a criterion, Theorem \ref{theorem:descent-theorem}, for descending $\varphi$-modules from $R$ to a polynomial ring.

Fix non-negative integers $a$ and $b$, along with rational numbers $b' \geq a'$ such that $a + b = a' + b'$. We define $N=b$ if $b'=a'$, and otherwise
$$
N = b + \left\lceil {b'-a' \over p-1} \right\rceil - 1.
$$ 
Note that $N \geq b$ always. We now consider the specific truncation operation
$$
T(C) = \begin{pmatrix} T_{\leq N}(c_{11}) & T_{\leq a}(c_{12})\\
T_{\leq b}(c_{21}) & T_{\leq a}(c_{22})\end{pmatrix}
$$
on $M_2(R)$. As before, we define the error matrix $E(C)$ according to $C = T(C) + E(C)$.
\begin{defn}\label{defn:allowable}
Suppose $\gamma > 0$ and $c_a,c_b \in \Lambda^\times$. 
\begin{enumerate}[label=(\alph*)]
\item For $C \in M_2(R)$, we say  $C$ is $\gamma$-allowable with scalars $(c_a,c_b)$ if
\begin{equation*}
C \in \begin{pmatrix} 0 & c_a u^a \\ c_bu^b & 0 \end{pmatrix} + \begin{pmatrix} H_{a'+\gamma} & H_{a+\gamma} \\ H_{b+\gamma} & H_{b'+\gamma}\end{pmatrix}.
\end{equation*}
\label{defn-part:allowable-part}
\end{enumerate}
Now assume that $C$ is $\gamma$-allowable with scalars $(c_a,c_b)$.\footnote{We sometimes later omit the scalars and just say ``$\gamma$-allowable''.}
\begin{enumerate}[label=(\alph*)]
\setcounter{enumi}{1}
\item If $C$ is $\gamma$-allowable and $E(C) = (e_{ij})$ then we define
\begin{align*}
\varepsilon_{11} &= v_R(e_{11}) - a'; & \varepsilon_{12} &= v_R(e_{12}) - a;\\
\varepsilon_{21} &= v_R(e_{21}) - b; & \varepsilon_{22} &= v_R(e_{22}) - b'.
\end{align*}
The value $\varepsilon_C = \min\{\varepsilon_{ij}\}$ is called the error of $C$. (Note $\varepsilon_C \geq \gamma > 0$.)
\label{defn-part:allowable-error-values}
\item An allowed operation $C \mapsto \alpha(C)$ is one of the four operations 
\begin{align*}
\alpha_{11}(C) &:= \begin{pmatrix} 1 & -e_{11}/c_bu^b \\ 0 & 1 \end{pmatrix} \ast_{\varphi} C; & \alpha_{12}(C) &:= \begin{pmatrix} 1-e_{12}/c_au^a & 0 \\ 0 &  1\end{pmatrix} \ast_{\varphi} C;\\
\alpha_{21}(C) &:= \begin{pmatrix} 1 & 0 \\ 0 &  1-e_{21}/c_bu^b\end{pmatrix} \ast_{\varphi} C; & \alpha_{22}(C) &:= \begin{pmatrix} 1 & 0 \\ -e_{22}/c_au^a & 1 \end{pmatrix} \ast_{\varphi} C.
\end{align*}
(The operations $\alpha_{12}$ and $\alpha_{21}$ are well-defined by the geometric series.)
\label{defn-part:allowed-operations}
\end{enumerate}
\end{defn}

\begin{rmk}\label{rmk:allowed-analysis}
Each allowed operation is of the form $C \mapsto A\ast_{\varphi} C$ where $A = 1 + X$ with $X|_{u=0} = 0$ and $v_R(X) \geq \varepsilon_C+\min\{a'-b,b'-a\} = \varepsilon_C \pm (b'-a)$. Thus if $\varepsilon_C \geq |b'-a|$, then a finite composition of allowed operations is of the same form.
\end{rmk}

\begin{rmk}\label{rmk:allowed-oper-connection}
The allowed operations were {\em all} studied in Section \ref{subsec:allowed-operations}. Indeed, if $(i,j) = (1,\ast)$ then we set $(q,r,s,t,r',s') = (N,a,b,a,a',b')$ in Section \ref{subsec:analysis-operations}, in which case $\alpha_{11}(C) = \rho(C)$ and $\alpha_{12}=\sigma(C)$ as in Proposition \ref{prop:operation-analysis}. On the other hand, if $(i,j) = (2,\ast)$ then we set $(q,r,s,t,r',s') = (a,b,a,N,b',a')$ and so $\alpha_{22}(C) = \rho(C^{\circ})^{\circ}$ and $\alpha_{21}(C) = \sigma(C^{\circ})^{\circ}$, where $D\mapsto D^{\circ}$ is given by $D^{\circ} = \begin{smallpmatrix} 0 & 1 \\ 1 & 0 \end{smallpmatrix} \ast_{\varphi} D$. (That is, usual conjugation by $\begin{smallpmatrix} 0 & 1 \\ 1 & 0 \end{smallpmatrix}$.) 
\end{rmk}

\begin{lemma}\label{lemma:allowed-error-gains}
Suppose that $C$ is $\gamma$-allowable and fix $1 \leq i,j \leq 2$. Then, $C' = \alpha_{ij}(C)$ is $\gamma$-allowable. Moreover, writing $\varepsilon_{\ast}'$ for the entry-by-entry errors of $C'$ in Definition \ref{defn:allowable}\ref{defn-part:allowable-error-values}, we have:
\begin{enumerate}[label=(\alph*)]
\item $\varepsilon_{ij}' \geq \varepsilon_{ij} + \gamma$;
\item $\varepsilon_{k\ell}' \geq \min\{\varepsilon_{k\ell}, \varepsilon_{ij}+\min\{\gamma,p-1\}\}$ for any $(k,\ell)$, except if $(i,j) = (1,1)$ and $(k,\ell) = (2,2)$;
\item if $(i,j) = (1,1)$ then $\varepsilon_{22}' \geq \min\{\varepsilon_{22},\varepsilon_{11}\}$.
\end{enumerate}
In particular, $\varepsilon_{C'} \geq \varepsilon_C$.
\end{lemma}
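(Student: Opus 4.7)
The plan is to deduce the lemma directly from Proposition \ref{prop:operation-analysis} by matching parameters via Remark \ref{rmk:allowed-oper-connection}. The two top-row operations $\alpha_{11}, \alpha_{12}$ correspond to $\rho(C), \sigma(C)$ with parameters $(q,r,s,t,r',s') = (N,a,b,a,a',b')$; the bottom-row operations $\alpha_{22}, \alpha_{21}$ correspond to $\rho(C^{\circ})^{\circ}, \sigma(C^{\circ})^{\circ}$ with $(q,r,s,t,r',s') = (a,b,a,N,b',a')$, where swap conjugation exchanges the positions $(1,1) \leftrightarrow (2,2)$ and $(1,2) \leftrightarrow (2,1)$. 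The key hypothesis $q \geq s + \max\{0, \lceil (s'-r')/(p-1) \rceil - 1\}$ is exactly the defining inequality for $N$ in the top-row case, so that the integer $n = q - s + 1$ satisfies $n(p-1) \geq b' - a'$. In the bottom-row case it reduces to $a \geq a$, which is immediate since $s' - r' = a' - b' \leq 0$.

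With the hypotheses verified, part (i) of the proposition gives $\gamma$-allowability of $C' = \alpha_{ij}(C)$, and parts (ii) and (iii) give entry-wise $H_{v}$-containments for $T(C') - T(C)$ and $E(C')$. Translating via Definition \ref{defn:allowable}\ref{defn-part:allowable-error-values}, which subtracts one of $a', a, b, b'$ from each $v_{R}(e_{k\ell})$, converts every $H_{v}$-bound into an additive inequality on $\varepsilon'_{k\ell}$. The $+\gamma$ gain at the operation's target entry is exactly (a); the $+\gamma$ or $+(p-1)$ gains at the other entries produce (b), except in the single case analysed next. The bottom-row operations go through identically after reindexing via the involution $(-)^{\circ}$; in particular, because $b' \geq a'$ works in the favorable direction, no exception to (b) arises there.

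The one subtle point, which is exactly why (b) excepts $(i,j) = (1,1)$, $(k,\ell) = (2,2)$ and why (c) is needed, is the bound on $\varepsilon'_{22}$ after $\alpha_{11}$. Here part \ref{lem:row-error} yields $e'_{22} - e_{22} \in H_{a' + \varepsilon_{11} + n(p-1)}$, hence
\[
\varepsilon'_{22} \geq \min\{\varepsilon_{22},\ \varepsilon_{11} + n(p-1) - (b' - a')\}.
\]
Since $n$ is only guaranteed to satisfy $n(p-1) \geq b' - a'$, the extra gain can be as small as $0$, which rules out (b); but the same inequality yields $\varepsilon'_{22} \geq \min\{\varepsilon_{22}, \varepsilon_{11}\}$, giving (c). Combining all four entry bounds across the four operations, the final inequality $\varepsilon_{C'} \geq \varepsilon_{C}$ follows by taking the minimum.

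The main obstacle is purely organizational: keeping straight which additive offsets apply to which entry and tracking the interplay between $n(p-1)$ and $b' - a'$ in the one delicate case. No new $p$-adic analysis is required beyond what Proposition \ref{prop:operation-analysis} and Lemma \ref{lem:frobenius-increase} already supply.
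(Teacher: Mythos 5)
Your proof is correct and follows essentially the same path as the paper's: both deduce the lemma from Proposition \ref{prop:operation-analysis} via the parameter-matching in Remark \ref{rmk:allowed-oper-connection}, verify that the hypothesis on $q$ is exactly the defining inequality for $N$ (or vacuous for the bottom-row case), and isolate the $(1,1)\to(2,2)$ entry as the one place where $n(p-1)-(b'-a')$ can degenerate to zero, giving (c) in lieu of (b). The paper chooses to spell out only the $(1,1)$ and $(2,2)$ cases in detail since they are the ones with asymmetric behavior, but your more uniform account of all four operations lands on the same estimates.
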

\begin{proof}
Once one uses the translations in Remark \ref{rmk:allowed-oper-connection}, the $\gamma$-allowable assertion is contained in the conclusions labeled (i) in Proposition \ref{prop:operation-analysis} and the estimates are contained in the conclusions labeled \ref{lem:row-error} in Proposition \ref{prop:operation-analysis}. We detail the cases where $(i,j) = (1,1)$ or $(2,2)$, to highlight the exceptional asymmetry in (b).

For $(i,j) = (1,1)$, apply part \ref{prop-part:row} of Proposition \ref{prop:operation-analysis} to $C$ with $(q,r,s,t,r',s') = (N,a,b,a,a',b')$. Set $n = q - s + 1 = N-b+1$, so $n(p-1) + a'-b' \geq 0$. Then, Proposition \ref{prop:operation-analysis}\ref{prop-part:row}\ref{lem:row-error} gives, in terms of the $\varepsilon$'s,
\begin{align*}
\varepsilon_{11}' &\geq \varepsilon_{11} + \gamma; & 
\varepsilon_{12}' &\geq \min\{\varepsilon_{12},\varepsilon_{11} + \gamma\};\\
\varepsilon_{21}' &= \varepsilon_{21}; &
\varepsilon_{22}' & \geq \min\{\varepsilon_{22},\varepsilon_{11} + n(p-1) + a' - b'\}
\geq \min\{\varepsilon_{22},\varepsilon_{11}\},
\end{align*}
which implies the lemma if $(i,j)=(1,1)$. For $(i,j) = (2,2)$, apply part \ref{prop-part:row} of Proposition \ref{prop:operation-analysis} to $C^{\circ}$ (see Remark \ref{rmk:allowed-oper-connection}) with the parameters $(q,r,s,t,r',s') = (a,b,a,N,b',a')$. Then, $n = q-s+1 = 1$. By Proposition \ref{prop:operation-analysis}\ref{prop-part:row}\ref{lem:row-error} we get, in terms of the $\varepsilon$'s,
\begin{align*}
\varepsilon_{11}' &\geq \min\{\varepsilon_{11},\varepsilon_{22}+p-1 + b'-a'\} \geq \min\{\varepsilon_{11}, \varepsilon_{22}+p-1\} & 
\varepsilon_{12}' &= \varepsilon_{12}\\
\varepsilon_{21}' &\geq \min\{\varepsilon_{21}, \varepsilon_{22} + \gamma\}; &
\varepsilon_{22}' & \geq \varepsilon_{22} + \gamma.
\end{align*}
(In the estimate of $\varepsilon_{11}'$, we used $b'\geq a'$.) This completes the claim for $(i,j) = (2,2)$.
\end{proof}

\begin{prop}\label{prop:induction-step-descent-theorem}
Assume that $\gamma > 0$ and $C$ is $\gamma$-allowable with scalars $(c_a,c_b)$. Then, there exists a finite composition  $\alpha$ of allowed operations such that $C' = \alpha(C)$ satisfies the following properties:
\begin{enumerate}[label=(\alph*)]
\item $C'|_{u=0} = C|_{u=0}$;\label{prop-part:induction-u0}
\item $C'$ is $\gamma$-allowable with scalars $(c_a,c_b)$;\label{prop-part:induction-allowable}
\item $T(C') - T(C) \in \begin{smallpmatrix} H_{r} & \ast \\ \ast & H_r \end{smallpmatrix}$ where $r = \varepsilon_C + a' + \min\{\gamma,p-1\}$;\label{prop-part:induction-truncate}
\item $\varepsilon_{C'} \geq \varepsilon_C+\min\{\gamma,p-1\}$.\label{prop-part:induction-error} 
\end{enumerate}
\end{prop}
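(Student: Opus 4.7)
The plan is to compose four allowed operations in the order $\alpha_{11}, \alpha_{22}, \alpha_{12}, \alpha_{21}$ and to verify the four conclusions by iterating Lemma \ref{lemma:allowed-error-gains} and tracking the truncation changes via Proposition \ref{prop:operation-analysis}. Let $\eta = \min\{\gamma, p-1\}$, $C^{(0)} = C$, and $C^{(k)}$ the matrix after $k$ steps; the target is $C' = C^{(4)}$.

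For (d), I iterate Lemma \ref{lemma:allowed-error-gains}. After $\alpha_{11}$, $\varepsilon^{(1)}_{11} \geq \varepsilon_C + \gamma$ while the other three entries remain $\geq \varepsilon_C$. After $\alpha_{22}$, $\varepsilon^{(2)}_{22} \geq \varepsilon_C + \gamma$, and this use is non-exceptional, so $\varepsilon^{(2)}_{11} \geq \min\{\varepsilon^{(1)}_{11}, \varepsilon^{(1)}_{22} + \eta\} \geq \varepsilon_C + \eta$ (using $\varepsilon^{(1)}_{11} \geq \varepsilon_C + \gamma \geq \varepsilon_C + \eta$ and $\varepsilon^{(1)}_{22} \geq \varepsilon_C$). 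After $\alpha_{12}$, $\varepsilon^{(3)}_{12} \geq \varepsilon_C + \gamma$ and the non-exceptional propagation lifts $\varepsilon^{(3)}_{11}$ and $\varepsilon^{(3)}_{22}$ to at least $\varepsilon_C + \eta$. Finally, $\alpha_{21}$ raises $\varepsilon^{(4)}_{21}$ by $\gamma$ and pulls the remaining entries up to $\geq \varepsilon_C + \eta$.

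Conditions (a) and (b) are easier. Inspecting the matrices in Definition \ref{defn:allowable}\ref{defn-part:allowed-operations}, each allowed operation has the form $(I+X) \ast_{\varphi} C$ with $X|_{u=0} = 0$ (all error entries $e_{ij}$ lie in $uR$); hence $(I+X)|_{u=0} = \varphi(I+X)|_{u=0} = I$, so $C^{(k)}|_{u=0} = C^{(k-1)}|_{u=0}$ throughout, giving (a). Condition (b) is the $\gamma$-allowability assertion of Lemma \ref{lemma:allowed-error-gains} applied at each step. For (c), I use Proposition \ref{prop:operation-analysis}\ref{prop-part:row}\ref{lem:row-truncation} for $\alpha_{11}$, Proposition \ref{prop:operation-analysis}\ref{prop-part:scale}\ref{lem:scaling-truncation} for $\alpha_{12}$, and the conjugate versions via Remark \ref{rmk:allowed-oper-connection} for $\alpha_{22}$ and $\alpha_{21}$. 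In each case, the diagonal change of $T$ lies in $H_{v+\gamma}$, $H_{v+n(p-1)}$, or $H_{v+\gamma+(a'-a)}$, where $v$ is the $v_R$-valuation of the error entry being killed and is at least $a'+\varepsilon_C$, $a+\varepsilon_C$, $b+\varepsilon_C$, or $b'+\varepsilon_C$ as appropriate (using that errors only increase across steps). Each of these bounds is at least $r = \varepsilon_C + a' + \eta$, using $b' \geq a'$, $p-1 \geq \eta$, and $N - b + 1 \geq 1$. Summing the four individual changes within the additive subgroup $H_r$ completes (c).

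The main obstacle is the asymmetric exception in Lemma \ref{lemma:allowed-error-gains}: applying $\alpha_{11}$ yields only $\varepsilon^{\prime}_{22} \geq \min\{\varepsilon_{22}, \varepsilon_{11}\}$ rather than $\min\{\varepsilon_{22}, \varepsilon_{11} + \eta\}$, which could pin $\varepsilon_{22}$ at $\varepsilon_C$ and spoil (d). The ordering above circumvents this by placing $\alpha_{22}$ immediately after $\alpha_{11}$: $\alpha_{22}$ raises $\varepsilon_{22}$ by $\gamma$ on its own terms while simultaneously pulling $\varepsilon_{11}$ safely above the target via the already-boosted $\varepsilon^{(1)}_{11}$. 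Reversing the order of the first two steps, or deferring $\alpha_{22}$, would leave $\varepsilon_{22}$ stuck after the exceptional estimate fires; the remaining bookkeeping of the truncation estimates in (c) is routine once this ordering is fixed.
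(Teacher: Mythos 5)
Your proposal is correct, and the reasoning is sound; but your ordering of the allowed operations is the reverse of the paper's, and it is worth noting why both work. The paper first applies off-diagonal operations $\alpha_{12},\alpha_{21}$ (possibly repeated) until both off-diagonal errors are $\geq \varepsilon_C + \min\{\gamma,p-1\}$, and only then applies $\alpha_{11}$ followed by $\alpha_{22}$. This way, when the exceptional estimate $\varepsilon_{22}' \geq \min\{\varepsilon_{22},\varepsilon_{11}\}$ fires after $\alpha_{11}$, the subsequent $\alpha_{22}$ lifts $\varepsilon_{22}$ directly while the already-boosted off-diagonals are large enough to survive the non-exceptional $\min$. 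You instead run $\alpha_{11},\alpha_{22},\alpha_{12},\alpha_{21}$: first secure both diagonals, then clean up the off-diagonals, which only see the non-exceptional estimate and cannot undo the diagonal gains. Both orderings successfully route around the asymmetry in Lemma \ref{lemma:allowed-error-gains}; your version is a fixed four-step composition, whereas the paper leaves open how many off-diagonal steps are needed (though in fact two suffice by the same kind of tracking you carry out). Your verification of (a), (b), and (c) matches the paper's: (a) from the shape $1+X$ with $X|_{u=0}=0$, (b) from part (i) of Proposition \ref{prop:operation-analysis}, and (c) by telescoping the diagonal truncation changes through $H_r$ using the non-decreasing error. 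One small imprecision: after $\alpha_{12}$ you say the non-exceptional propagation ``lifts'' $\varepsilon_{22}$, but in fact $\alpha_{12}$ either leaves $\varepsilon_{22}$ unchanged or is bounded below by the already-sufficient $\min\{\varepsilon^{(2)}_{22},\varepsilon^{(2)}_{12}+\eta\}$; the conclusion $\varepsilon^{(3)}_{22}\geq\varepsilon_C+\eta$ is nonetheless correct. Similarly $\alpha_{21}$ does not affect $\varepsilon_{11}$, which is already above the target by that stage.
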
 
\begin{proof}
For any composition $\alpha$,  \ref{prop-part:induction-u0} follows from Remark \ref{rmk:allowed-analysis} and \ref{prop-part:induction-allowable} follows from the conclusions \ref{lem:row-induction} in Proposition \ref{prop:operation-analysis}. For a single allowed operation, part \ref{prop-part:induction-truncate} follows from the conclusions \ref{lem:row-truncation} in Proposition \ref{prop:operation-analysis} (using the settings in Remark \ref{rmk:allowed-oper-connection}; recall that $b' \geq a'$ is assumed). The statement continues to hold for a composition of allowed operations because the error is non-decreasing after each operation by the final statement of Lemma \ref{lemma:allowed-error-gains}. 

So we only must show \ref{prop-part:induction-error} can be arranged. By Lemma \ref{lemma:allowed-error-gains}, we may repeatedly apply off-diagonal allowed operations to find a finite composition $\alpha$ of allowed operations such that $\widetilde{C} = \alpha(C)$ satisfies $\widetilde{\varepsilon}_{ij} \geq \varepsilon_C + \min\{\gamma,p-1\}$ for $i\neq j$. Then set $C' = \alpha_{22}\circ \alpha_{11}(\widetilde{C})$. From Lemma \ref{lemma:allowed-error-gains}, we have
\begin{equation*}
\varepsilon_{k\ell}'  \geq
\begin{cases}  \varepsilon_{\widetilde C} + \min\{\gamma,p-1\} & \text{if $k = \ell$};\\
\min\{\widetilde \varepsilon_{k\ell}, \varepsilon_{\widetilde C} + \min\{\gamma,p-1\}\},  & \text{if $k \neq \ell$}.
\end{cases}
\end{equation*}
Since $\varepsilon_{\widetilde  C} \geq \varepsilon_{C}$, by Lemma \ref{lemma:allowed-error-gains} again, this completes the proof of \ref{prop-part:induction-error}.
\end{proof}

\begin{rmk}\label{rmk:extra-useless}
The estimate in part \ref{prop-part:induction-truncate} of Proposition \ref{prop:induction-step-descent-theorem} can be strengthened though statement is more  complicated. Namely, we could have written that $T(C') - T(C) \in \begin{smallpmatrix} H_v & \ast \\ \ast & H_w \end{smallpmatrix}$ where  $v = \varepsilon_C + \min\{a'+\gamma,b'+p-1\}$ and $w = \varepsilon_C + \min\{b'+\gamma,a'+n(p-1)\}$ where $n =\left\lceil {b'-a' \over p-1} \right\rceil$ unless $a' = b'$,  then $n=1$. The same estimates could be used in part (c) of the next result as well.
\end{rmk}

\begin{thm}\label{theorem:descent-theorem}
Assume that $\gamma > 0$ and $C$ is $\gamma$-allowable. Then, there exists a matrix $A \in \GL_2(R)$ such that $C' := A\ast_{\varphi} C$ satisfies
\begin{enumerate}[label=(\alph*)]
\item $C|_{u=0} = C'|_{u=0}$,
\item $C' = T(C')$, and
\item $C'-T(C) \in \begin{smallpmatrix} H_r & \ast \\ \ast & H_r \end{smallpmatrix}$ where $r= \varepsilon_C + a' + \min\{\gamma,p-1\}$. 
\end{enumerate}
\end{thm}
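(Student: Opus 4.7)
The natural approach is to iterate Proposition \ref{prop:induction-step-descent-theorem} and take a limit in the $v_R$-topology on $M_2(R)$, which is complete. Starting from $C^{(0)} := C$, successively apply the proposition to produce a sequence of $\gamma$-allowable matrices $C^{(n+1)} = \alpha^{(n+1)}(C^{(n)})$, where each $\alpha^{(n+1)}$ is a finite composition of allowed operations. By parts (a) and (d) of the proposition we have $C^{(n)}|_{u=0} = C|_{u=0}$ and $\varepsilon_{C^{(n)}} \geq \varepsilon_C + n\min\{\gamma, p-1\} \to \infty$, while part (c) yields
\begin{equation*}
T(C^{(n+1)}) - T(C^{(n)}) \in \begin{pmatrix} H_{r_n} & \ast \\ \ast & H_{r_n}\end{pmatrix},\qquad r_n := \varepsilon_{C^{(n)}} + a' + \min\{\gamma, p-1\} \geq r.
\end{equation*}

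\textbf{Passage to the limit.} Each $\alpha^{(n+1)}$ has the form $D \mapsto A_n \ast_\varphi D$ for some $A_n \in \GL_2(R)$ with $A_n|_{u=0} = I$ and $v_R(A_n - I) \geq \varepsilon_{C^{(n)}} - |b'-a|$, by Remark \ref{rmk:allowed-analysis}. Setting $B_n = A_n A_{n-1} \cdots A_0$, we have $C^{(n+1)} = B_n \ast_\varphi C$. The identity $B_{n+1} - B_n = (A_{n+1} - I) B_n$, together with $v_R(A_n - I) \to \infty$ and a short induction showing $v_R(B_n)$ is bounded below (using that only finitely many indices $n$ satisfy $v_R(A_n - I) \leq 0$), implies that $(B_n)$ is Cauchy in $v_R$. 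Let $A := \lim B_n \in M_2(R)$ and define $C' := A \ast_\varphi C$. Continuity of the $\ast_\varphi$-action then gives $C^{(n)} \to C'$ in $M_2(R)$.

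\textbf{Verifying the conclusions.} Property (a) follows from $A|_{u=0} = \lim B_n|_{u=0} = I$. For (b), note that $v_R$ of each entry of $E(C^{(n)})$ is at least $\varepsilon_{C^{(n)}} + \min\{a',a,b,b'\} \to \infty$, so $E(C^{(n)}) \to 0$ in $M_2(R)$; by continuity of the truncation operator $T$ (hence of $E$), $E(C') = 0$, so $C' = T(C')$. For (c), since $C' = T(C')$, write
\begin{equation*}
C' - T(C) = T(C') - T(C) = \lim_{n\to\infty}\sum_{k=0}^{n-1}\big(T(C^{(k+1)}) - T(C^{(k)})\big),
\end{equation*}
which lies in $\begin{smallpmatrix} H_r & \ast \\ \ast & H_r \end{smallpmatrix}$ on the diagonal by the ultrametric inequality, since each summand does.

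\textbf{Main obstacle.} The subtlety is ensuring $A \in \GL_2(R)$ rather than merely $M_2(R)$: if $\varepsilon_C \leq |b'-a|$, the initial $A_n$'s may satisfy $v_R(A_n - I) \leq 0$, so one cannot invert the full infinite product by a single geometric series. The workaround is to split $A = \big(\prod_{n \geq N_0} A_n\big) \cdot B_{N_0 - 1}$ by choosing $N_0$ large enough that $\varepsilon_{C^{(n)}} > |b'-a|$ for $n \geq N_0$. The tail product then has the form $I + Y$ with $v_R(Y) > 0$, so is invertible via the geometric series, and the finite initial piece $B_{N_0 - 1}$ is already in $\GL_2(R)$. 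Combining these factors establishes $A \in \GL_2(R)$ and completes the argument.
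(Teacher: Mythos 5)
Your proof is correct and follows essentially the same approach as the paper's: iterate Proposition \ref{prop:induction-step-descent-theorem}, show the partial products of conjugating matrices form a Cauchy sequence in $\GL_2(R)$, and pass to the limit using the $v_R$-topology. The paper's own argument is noticeably terser — it simply asserts that "the infinite product $A := \prod_m A_m$ converges in $\GL_2(R)$" and that the conclusions follow "by induction again" — whereas you spell out the bookkeeping: the lower bound on $v_R(B_n)$, the split $A = \big(\prod_{n\geq N_0} A_n\big)B_{N_0-1}$ to guarantee invertibility, the continuity of $T$ (and hence of $E$) to pass the vanishing of the error matrix to the limit, and the telescoping sum with closedness of $H_r$ for part (c). These are exactly the details the paper leaves implicit.
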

\begin{proof}
Write $C=C^{(0)}$. Using Proposition \ref{prop:induction-step-descent-theorem}, we may for each $m>0$ choose a finite composition of allowed operations, say with matrix $A_m$, such that ${C^{(m)}}$ defined by
$$
C^{(m)} = A_m \ast_{\varphi} C^{(m-1)}
$$
satisfies the properties:
\begin{enumerate}
\item $C^{(m)}|_{u=0} = C|_{u=0}$,
\item $C^{(m)}$ is $\gamma$-allowable,
\item $T(C^{(m)})-T(C^{(m-1)}) \in \begin{smallpmatrix} H_{r_m} & \ast \\ \ast & H_{r_m} \end{smallpmatrix}$ where $r_m =  \varepsilon_{C^{(m-1)}} + a' + \min\{\gamma,p-1\}$, and
\item $\varepsilon_{C^{(m)}} \geq \varepsilon_{C^{(m-1)}}+\min\{\gamma,p-1\}$.
\end{enumerate}
For $m$ sufficiently large, $\varepsilon_{C^{(m)}} \geq |b'-a|$. In that case, Remark \ref{rmk:allowed-analysis} implies  $v_R(1-A_m) \geq \varepsilon_{C^{(m)}} \pm (b'-a)$ (for a constant $\pm$). Thus $A_m \rightarrow 1$ as $m \rightarrow \infty$,  meaning the infinite product $A := \prod_m A_m$ converges in $\GL_2(R)$. By induction again, $C' = A\ast_{\varphi} C$ satisfies the conclusion of the theorem.
\end{proof}

\section{Application}\label{sec:application}

We now specialize to the notations of Section \ref{subsec:notations} with $m = p$. So, we let $R = \cO_{F, [0, p^{-1/p}]}$.   Recall that just before Theorem \ref{thm:descent-monodromy}, for any $a_p \in \fm_F$, we defined a $\phz$-module $\widetilde{\cM}_{a_p}= R^{\oplus 2}$ with Frobenius given by 
\[
C_{a_p} := \begin{pmatrix} a_p \left(\lambda_- \over \lambda_{++}\right)^{h} & -1 \\ E^h & 0 \end{pmatrix},
\] 
where $E = u+p$ and
\begin{equation}\label{eqn:lambda-defn-reminder}
\lambda_- = \prod_{i \geq 0} \left(1 + {u^{p^{1+2i}}\over p}\right) = 1 + {u^p\over p} + \dotsb \quad \text{and} \quad \lambda_{++} = \prod_{i \geq 1} \left(1+ {u^{p^{2i}} \over p}\right) = 1 + {u^{p^{2}}\over p} + \dotsb.
\end{equation}
Our goal is to descend $\widetilde{\cM}_{a_p}$ from $R$ to $\fS_{F}$ and, when $v_p(a_p)$ is large enough, to descend it to $\fS_{\Lambda}$.  The first goal is carried out in Theorem \ref{thm:descent2} by applying the algorithm from Section \ref{sec:descent-algorithm}.  We then show, in Proposition \ref{prop:furtherdescent}, that an integral descent is exists when $v_p(a_p)$ is large enough.

\subsection{Preliminaries}\label{subsec:initial-descent}
We begin with some straightforward calculations.

\begin{lemma}  \label{lemma:est}
With $R = \mathcal O_{F,[0,p^{-1/p}]}$, we have
\begin{enumerate}[label=(\alph*)]
\item $v_R(\lambda_-) = v_R(\lambda_{++}) = 0$;
\item $v_R(1 - \lambda_{++})  = p^2 - p$;
\item $v_R(1-\varphi(\lambda_{++})) = p^3 - p^2$.
\end{enumerate}
\end{lemma}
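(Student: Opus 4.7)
All three assertions follow from direct manipulation of the explicit product formulas in \eqref{eqn:lambda-defn-reminder}, using three basic facts about the Gauss-type valuation $v_R(f) = \min_i\{i + p\,v_p(a_i)\}$: it is additive on products, it satisfies the ultrametric inequality on sums, and equality in the ultrametric holds whenever the summands are supported on distinct monomials in $u$.

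For (a), I would first check that each factor of $\lambda_-$ and $\lambda_{++}$ has $v_R = 0$. A typical factor is $1 + u^{p^k}/p$ with $k \geq 1$; here $v_R(u^{p^k}/p) = p^k - p \geq 0$, and because the constant $1$ and $u^{p^k}/p$ sit on distinct powers of $u$, the sum has $v_R = 0$. The same estimate shows the infinite products converge in $R$ (the $k$-th factor differs from $1$ by an element of $H_{p^k - p}$, and $p^k - p \to \infty$). Multiplicativity of $v_R$ then gives $v_R(\lambda_-) = v_R(\lambda_{++}) = 0$.

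For (b), I would expand the product formula for $\lambda_{++}$ as
\[
\lambda_{++} - 1 \;=\; \sum_{\emptyset \neq S} \prod_{i \in S} \frac{u^{p^{2i}}}{p} \;=\; \sum_{\emptyset \neq S} \frac{u^{\sum_{i \in S} p^{2i}}}{p^{|S|}},
\]
where $S$ ranges over finite nonempty subsets of $\{1,2,\ldots\}$. The exponent $\sum_{i \in S} p^{2i}$ already encodes $S$ uniquely (its base-$p$ expansion reads off $S$), so distinct $S$'s contribute distinct monomials: no cancellation can occur. The $v_R$-value of the $S$-term is $\sum_{i \in S} p^{2i} - p|S|$; this is minimized uniquely at $S = \{1\}$, giving $p^2 - p$. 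Hence $v_R(1 - \lambda_{++}) = p^2 - p$.

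For (c), the strategy is identical: apply $\varphi$ termwise to obtain $\varphi(\lambda_{++}) = \prod_{i \geq 1}(1 + u^{p^{2i+1}}/p)$, expand $1 - \varphi(\lambda_{++})$ as a sum over finite nonempty $S$, observe that the monomials remain distinct, and identify the $S$ that achieves the minimum of $\sum_{i \in S} p^{2i+1} - p|S|$ (again $S = \{1\}$). Alternatively, one can deduce the lower bound from (b) via Lemma \ref{lem:frobenius-increase} applied to $1 - \lambda_{++} \in H_{p^2-p} \cap u^{p^2}R$, and match this against a direct calculation of the leading term to pin down the exact value. The only subtlety in the whole lemma — the ``main obstacle'', such as there is one — is confirming that no two terms in the expanded product can share a monomial in $u$, so that the ultrametric inequality becomes an equality; this is handled by uniqueness of base-$p$ expansions.
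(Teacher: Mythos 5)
Your approach is correct and takes a slightly different route from the paper's for part (b). The paper gets the lower bound by writing $1-\lambda_{++} = \varphi(1-\lambda_-)$, noting $1-\lambda_- \in H_0 \cap u^pR$ (from (a)), and invoking Lemma \ref{lem:frobenius-increase}; the matching upper bound comes from the coefficient of $u^{p^2}$. You instead expand the product over finite subsets $S$ and observe that the exponents $\sum_{i\in S}p^{2i}$ are distinct (by uniqueness of base-$p$ expansion), so the term-by-term minimum is achieved with no cancellation. Both work; yours is more self-contained, the paper's reuses the previously-proved Frobenius estimate and is shorter.

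There is, however, a real issue in your part (c) that you should have caught by finishing the arithmetic. With $S = \{1\}$ your expansion gives $v_R$-value $\sum_{i\in S}p^{2i+1} - p|S| = p^3 - p$, not $p^3 - p^2$; the same number falls out of your alternative route, since Lemma \ref{lem:frobenius-increase} applied to $1-\lambda_{++} \in H_{p^2-p}\cap u^{p^2}R$ gives the lower bound $p^2(p-1) + (p^2-p) = p^3 - p$, and $v_R(u^{p^3}/p) = p^3 - p$ gives the matching upper bound. So your method actually yields $v_R\bigl(1-\varphi(\lambda_{++})\bigr) = p^3 - p$, which contradicts the stated value $p^3 - p^2$. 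You simply write ``(again $S=\{1\}$)'' without evaluating, and thereby miss that your own computation disagrees with the statement you are asked to prove. In fact the statement of part (c) appears to be off (the paper's ``proven similarly'' argument, done with the crude bound $v_R(1-\lambda_{++}) \geq 0$, proves only $\geq p^2(p-1) = p^3-p^2$, and the tight upper bound $p^3 - p$ does not match it); the correct value is $p^3 - p$. The discrepancy is harmless downstream, since in Lemma \ref{lem:L1} only the inequality $v_R(y) \geq p^2 - p$ is used, and both candidate values satisfy that. But a complete proof needs to either establish the number literally stated or note that it should be corrected; leaving it as ``match this against a direct calculation'' without doing the calculation is a gap.
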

\begin{proof} 
Part (a) is clear. For (b) we have $1-\lambda_{++} = \varphi(1-\lambda_-)$. Since $1-\lambda_-$ vanishes to order $p$ at $u = 0$, $v_R(1-\lambda_{++}) \geq p(p-1)$ by Lemma \ref{lem:frobenius-increase} and part (a). On the other hand, by definition $v_R(1-\lambda_{++}) \leq v_R(u^{p^2}/ p) = p^2 - p$ and this proves (b). Part (c) is proven similarly.
\end{proof}

\begin{lemma} \label{lem:integral} If $Q \in F[u]$ is of degree at most $d$ and $v_R(Q) > d$, then $Q \in \fm_F[u]$.    
\end{lemma}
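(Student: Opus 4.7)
The plan is to unpack the definitions directly: there is essentially nothing to do besides comparing the hypothesis $v_R(Q) > d$ with the formula $v_R(f) = \min_i\{i + m v_p(a_i)\}$ from Section \ref{subsec:notations}, remembering that we are now in the setting of Section \ref{sec:application} where $m = p$.

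Concretely, I would write $Q = \sum_{i=0}^d a_i u^i$ with $a_i \in F$. By the definition of $v_R$, for each index $0 \leq i \leq d$ we have
\[
i + p v_p(a_i) \geq v_R(Q) > d,
\]
so $p v_p(a_i) > d - i \geq 0$, which forces $v_p(a_i) > 0$, i.e.\ $a_i \in \fm_F$. Since this holds for every coefficient, $Q \in \fm_F[u]$.

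There is no real obstacle here; the only small point worth flagging is that the non-negativity $d - i \geq 0$ is exactly where the degree hypothesis is used. If one dropped the degree bound, then large $i$ would contribute $i + p v_p(a_i) > d$ with potentially negative $v_p(a_i)$, and the conclusion would fail. So the lemma is really a bookkeeping statement saying that on polynomials of degree $\leq d$ the Gauss-type valuation $v_R$ detects integrality of the coefficients as soon as it exceeds $d$.
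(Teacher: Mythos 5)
Your proof is correct and is exactly the unpacking of definitions that the paper leaves implicit (the paper's proof is simply ``Clear''). Your remark that the degree bound is what makes $d - i \geq 0$ is the right thing to notice.
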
 
\begin{proof}
Clear.
\end{proof}

\begin{lemma} \label{lem:L1}  If $v_p(a_p) > \left\lfloor {h \over p} \right\rfloor$, then $T_{\leq h} \left(a_p\left({\lambda_- \over \lambda_{++} } \right)^h\right) \in \fm_F[u]$.
\end{lemma}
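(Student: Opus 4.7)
The plan is to show that every coefficient of the polynomial $T_{\leq h}(a_p (\lambda_-/\lambda_{++})^h)$ lies in $\fm_F$. Setting $f = \lambda_-/\lambda_{++}$ and writing $f^h = \sum_k c_k u^k$, this amounts to showing $v_p(a_p c_k) > 0$ for every $0 \leq k \leq h$. Combined with the hypothesis $v_p(a_p) > \lfloor h/p \rfloor$, it is enough to establish the estimate $v_p(c_k) \geq -\lfloor h/p \rfloor$ for $0 \leq k \leq h$.

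First I would show that $f$ is a unit in $R$ with $v_R(f) = 0$. Lemma \ref{lemma:est}(a) gives $v_R(\lambda_-) = v_R(\lambda_{++}) = 0$, while Lemma \ref{lemma:est}(b) gives $v_R(1 - \lambda_{++}) = p^2 - p > 0$, so the geometric series $\lambda_{++}^{-1} = \sum_n (1-\lambda_{++})^n$ converges in $R$ and has $v_R = 0$. Multiplicativity of $v_R$, which holds because $R$ is a Tate algebra with its Gauss valuation, then yields $v_R(f^h) = 0$. Unpacking the definition of $v_R$, this is equivalent to $k + p\, v_p(c_k) \geq 0$, i.e., $v_p(c_k) \geq -k/p$ for every $k$.

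The main (if modest) obstacle is that this continuous bound only gives $v_p(a_p c_k) > \lfloor h/p \rfloor - k/p$, which for $k$ in the incomplete block $p\lfloor h/p \rfloor < k \leq h$ is not enough to conclude positivity. The upgrade I would use is that the $c_k$ lie in $\Q$: directly from \eqref{eqn:lambda-defn-reminder}, both $\lambda_-$ and $\lambda_{++}$ have coefficients in $\Z[1/p]$, and the geometric expansion of $\lambda_{++}^{-1}$ inherits this property, so $f^h \in \Q[\![u]\!]$. Consequently $v_p(c_k) \in \Z$, which forces $v_p(c_k) \geq \lceil -k/p \rceil = -\lfloor k/p \rfloor$. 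Since $\lfloor k/p \rfloor \leq \lfloor h/p \rfloor$ when $k \leq h$, the desired estimate follows, and the lemma is immediate.

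This rationality-to-integrality upgrade is the entire non-trivial content of the lemma: it is exactly what explains why the hypothesis involves the floor function $\lfloor h/p \rfloor$ rather than the real number $h/p$ that the Gauss-valuation estimate by itself would require.
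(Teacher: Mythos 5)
Your proof is correct, and it does take a different route than the paper. The paper compares $a_p(\lambda_-/\lambda_{++})^h$ to the simpler series $a_p(1+u^p/p)^h$: it first verifies the truncation of the latter lies in $\fm_F[u]$ by examining the explicit coefficients $a_p\binom{h}{j}/p^j$ (the floor enters because $j\le h/p$ and $j\in\Z$ give $j\le\lfloor h/p\rfloor$), then uses Lemma \ref{lemma:est}(b)(c) to bound the $v_R$-valuation of the difference $a_p\bigl((\lambda_-/\lambda_{++})^h-(1+u^p/p)^h\bigr)$ below by $h$, and closes with Lemma \ref{lem:integral}. You instead estimate the coefficients $c_k$ of $(\lambda_-/\lambda_{++})^h$ directly: submultiplicativity of $v_R$ (full multiplicativity, as you invoke, is true but not needed) gives $v_R(f^h)\ge 0$ and hence $v_p(c_k)\ge -k/p$, and then the integrality of $v_p(c_k)$ --- coming from $c_k\in\Z[1/p]$ --- upgrades this to $v_p(c_k)\ge -\lfloor k/p\rfloor\ge -\lfloor h/p\rfloor$. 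Both arguments are ultimately powered by the same floor-function integrality phenomenon, just located in different places: the paper puts it in the binomial coefficients of $(1+u^p/p)^h$, you put it in $v_p(c_k)\in\Z$. Your version is shorter and more conceptual for the lemma in isolation; the paper's version is better adapted to the surrounding narrative, where the explicit approximant $a_p(1+u^p/p)^h$ and the error estimate \eqref{eqn:funny-diff-estimate} reappear in describing the polynomial $P$ of Theorem \ref{thm:descent2} and Proposition \ref{prop:furtherdescent}.
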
 
\begin{proof}
Since $v_p(a_p) > \left\lfloor {h \over p} \right\rfloor$, we have by direct examination that
\begin{equation}\label{eqn:poly-estimate-first}
T_{\leq h} \left(a_p \left(1+{u^p \over p}\right)^h\right) \in \fm_F[u].
\end{equation}
Now, let $z = 1 - \lambda_{++}$ and $y = \varphi(\lambda_{++})-1$, so that 
\[
{\lambda_- \over \lambda_{++} } =\left(1+{u^p \over p}\right) (1 + y)\sum_{i=0}^{\infty} z^i.
\]  
By Lemma \ref{lemma:est}, $v_R(z) = p^2 - p$ and $v_R(y) = p^3 - p^2$,    Hence,
\begin{equation}\label{eqn:funny-diff-estimate}
v_R\left ( \left({\lambda_- \over \lambda_{++} }\right)^h - \left(1+{u^p \over p}\right)^h \right) \geq p^2 - p \geq p-1.  
\end{equation}
Since $p\left\lfloor {h \over p}\right\rfloor + p -1 \geq h$
and $v(a_p) > \lfloor{{h\over p}\rfloor}$, 
we have $p v_p(a_p) + p -1 > h$. So by \eqref{eqn:funny-diff-estimate} we conclude
\begin{equation}\label{eqn:compare-big-h}
v_R\left(a_p\left({\lambda_- \over \lambda_{++} }\right)^h - a_p\left(1+{u^p \over p}\right)^h\right) > h.
\end{equation}
The lemma now follows from  \eqref{eqn:poly-estimate-first}, \eqref{eqn:compare-big-h}, and Lemma \ref{lem:integral}.
%
\end{proof}

%

\subsection{Reductions}\label{subsec:reductions}

In this section, we prove  the main result on descent:

\begin{thm} \label{thm:descent2} 
Let $a_p \in \fm_F$.  Choose any rational number $a' \leq h/2$ such that $p v_p(a_p) > a'$. Define $N = h$ if $a' = h/2$, otherwise set
$
N = h + \left\lceil {h - 2 a' \over p -1 } \right\rceil - 1.
$
Then, there exists a descent $\widetilde{\fM}_{a_p}$ of $\widetilde{\cM}_{a_p}$ to $\fS_F$ such that the Frobenius on $\widetilde{\fM}_{a_p}$ is given by
\begin{equation*}
C = \begin{pmatrix} P & -1 \\ E^h & 0 \end{pmatrix}
\end{equation*}
where $P$ is a polynomial of degree $\leq N$ satisfying $P(0) = a_p$ and
\begin{equation}\label{eqn:P0-truncation}
v_R\left(P - T_{\leq N}\left(a_p \left({\lambda_- \over \lambda_{++} } \right)^h \right)\right) \geq p v_p(a_p) + \min\{pv_p(a_p) - a', p-1\}.
\end{equation}
Moreover, $\widetilde{\mathfrak M}_{a_p}$ satisfies the monodromy condition, $D(\widetilde{\mathfrak M}_{a_p})$ is weakly-admissible, and $V_{\cris}^{\ast}(D(\widetilde{\mathfrak M}_{a_p})) = V_{h+1,a_p}$.
\end{thm}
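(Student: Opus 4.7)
The approach is to apply the descent algorithm of Theorem \ref{theorem:descent-theorem} directly to $C_{a_p}$, taking the parameters of Section \ref{subsec:allowed-operations} to be $a = 0$, $b = h$, $b' = h - a'$, with scalars $c_a = -1$, $c_b = 1$. Under these choices the integer $N$ defined in Section \ref{subsec:allowed-operations} matches the one in the theorem statement, and the constraint $a' \leq b'$ becomes the hypothesis $a' \leq h/2$. The relevant truncation is $T(C) = \begin{smallpmatrix} T_{\leq N}(c_{11}) & T_{\leq 0}(c_{12}) \\ T_{\leq h}(c_{21}) & T_{\leq 0}(c_{22}) \end{smallpmatrix}$; so the off-diagonal entries of $T(C')$ will be constants, while the $(1,1)$ entry is a polynomial of degree $\leq N$ and the $(2,1)$ entry a polynomial of degree $\leq h$.

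The first step is to check that $C_{a_p}$ is $\gamma$-allowable for some $\gamma > 0$. By Lemma \ref{lemma:est}(a), $v_R(a_p(\lambda_-/\lambda_{++})^h) = p v_p(a_p)$, so the $(1,1)$ requirement is $a' + \gamma \leq p v_p(a_p)$. For the $(2,1)$ entry, expanding $E^h - u^h = \sum_{j \geq 1} \binom{h}{j} p^j u^{h-j}$ and noting that the general term has $v_R$-valuation $h + j(p-1) \geq h + p - 1$ yields the $(2,1)$ requirement $\gamma \leq p - 1$. The hypothesis $p v_p(a_p) > a'$ then lets us take $\gamma := \min\{p v_p(a_p) - a',\, p - 1\}$, which is positive. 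Moreover, the only nonzero entry of the error matrix $E(C_{a_p})$ is $e_{11}$ with $v_R(e_{11}) \geq p v_p(a_p)$, so $\varepsilon_{C_{a_p}} \geq p v_p(a_p) - a'$.

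Theorem \ref{theorem:descent-theorem} then supplies $A \in \GL_2(R)$ with $C' := A \ast_\varphi C_{a_p}$ equal to $T(C')$. Hence $C' = \begin{smallpmatrix} P & p_{12} \\ p_{21} & p_{22} \end{smallpmatrix}$ with $P$ a polynomial of degree $\leq N$, $p_{21}$ of degree $\leq h$, and $p_{12}, p_{22}$ constants in $F$. Part (a) of that theorem together with $C_{a_p}|_{u=0} = \begin{smallpmatrix} a_p & -1 \\ p^h & 0 \end{smallpmatrix}$ force $P(0) = a_p$, $p_{12} = -1$, $p_{22} = 0$, and $p_{21}(0) = p^h$. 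The estimate \eqref{eqn:P0-truncation} follows immediately from part (c) of the descent theorem applied to the $(1,1)$ entry, combined with $\varepsilon_{C_{a_p}} \geq p v_p(a_p) - a'$ and the fact that $\min\{\gamma, p-1\} = \gamma$ by construction.

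The principal remaining obstacle is to promote $p_{21}$ from ``polynomial of degree $\leq h$ with $p_{21}(0) = p^h$'' to precisely $E^h$. This comes from a determinant calculation: $\det(C_{a_p}) = E^h$ and conjugation gives $\det(C') = \bigl(\det A\bigr)/\varphi(\det A) \cdot E^h$, an $R$-unit multiple of $E^h$; meanwhile $\det(C') = -p_{12} p_{21} = p_{21}$ since $p_{22} = 0$ and $p_{12} = -1$. Thus $p_{21}/E^h \in R^\times$, hence is analytic on the closed disc $|u|_p \leq p^{-1/p}$. Since $v_p(-p) = 1 > 1/p$, this disc contains the root $u = -p$ of $E$, so $p_{21}$ must vanish there to order $\geq h$; together with $\deg p_{21} \leq h$ and $p_{21}(0) = p^h$, this forces $p_{21} = E^h$. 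Setting $\widetilde{\fM}_{a_p} = \fS_F^{\oplus 2}$ with Frobenius $C'$ then realizes the sought descent of $\widetilde{\cM}_{a_p}$ to $\fS_F$, and the remaining assertions --- monodromy condition, weak-admissibility, and $V_{\cris}^\ast(D(\widetilde{\fM}_{a_p})) = V_{h+1,a_p}$ --- follow by applying Theorem \ref{thm:descent-monodromy} to $\widetilde{\fM}_{a_p} \otimes_{\fS_F} \cO_F$ with $r = 1/p \in (p^{-2}, 1)$.
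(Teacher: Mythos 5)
The proposal is correct and follows essentially the same route as the paper: choose $a=0$, $b=h$, $b'=h-a'$, verify $\gamma$-allowability with $\gamma = \min\{pv_p(a_p)-a',\,p-1\}$ and $\varepsilon_{C_{a_p}} \geq pv_p(a_p)-a'$, invoke Theorem \ref{theorem:descent-theorem}, pin down the off-diagonal constants and $P(0)$ from the $u=0$ specialization, and pin down the $(2,1)$ entry by a determinant argument. Your slightly more explicit version of the final step---observing that $u=-p$ lies in the closed disc $|u|_p\leq p^{-1/p}$ where $R$-units are nonvanishing, forcing $p_{21}$ to vanish to order $h$ there---is a faithful unpacking of the paper's appeal to unique factorization.
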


\begin{proof}
We choose $a = 0, b  = h$, let $a'$ be as in the theorem, and set $b' = h -a'$ in the setup of Section \ref{subsec:allowed-operations}. Then, $N$ is taken as in the statement of this theorem. 

By Lemma \ref{lemma:est}, $v_R((\lambda_-/\lambda_{++})^h) = 0$ and thus $v_R\left(a_p\left({\lambda_- \over \lambda_{++} } \right)^h\right) = pv_p(a_p) > a'$; we also have $v_R(u^h-E^h) \geq h +  p -1$. Thus, $C_{a_p}$ is $\gamma$-allowable with scalars $(c_0,c_h) = (-1,1)$, for $\gamma = \min\{p v_p(a_p) - a', p-1\}$. The error $\varepsilon_{C_{a_p}}$ of $C_{a_p}$ satisfies 
$$
\varepsilon_{C_{a_p}} = v_R\left(T_{>N}\left(a_p\left({\lambda_- \over \lambda_{++} } \right)^h\right)\right) \geq pv_p(a_p) - a'.
$$
Applying Theorem \ref{theorem:descent-theorem} to $C_{a_p}$, we get a $\varphi$-conjugate $C=A\ast_{\varphi} C_{a_p}$ of the form
$$
C = \begin{pmatrix} P & x \\ f & y \end{pmatrix}
$$
with $P$ a polynomial of degree at most $N$, $f$ a polynomial of degree at most $h$, and $x,y$ constants. Moreover, part (a) of Theorem \ref{theorem:descent-theorem} implies $P(0) = a_p$, $x = -1$, $y = 0$, and $f(0) = E(0)^h$, and part (c) implies that $P$ satisfies
$$
v_R\left(P - T_{\leq N}\left(a_p \left({\lambda_- \over \lambda_{++} } \right)^h \right)\right) \geq p v_p(a) + \min\{pv_p(a_p) - a', p-1\}.
$$
Comparing the determinant of $C$ to the determinant of $A \ast_{\varphi} C_{a_p}$, we see $f = r E^h$ where $r \in R^\times$. So, $f$ is a polynomial of degree at most $h$, with a zero of order $h$ at $u=-p$, and $f(0) = E(0)^h$. By unique factorization in $F[\![u]\!]$ the only possibility is that $r = 1$. 

So, $\widetilde{\fM}_{a_p} = \fS_{F}^{\oplus 2}$ with Frobenius given by $C$ satisfies the first half of the theorem. To justify the ``moreover'' portion, apply Theorem \ref{thm:descent-monodromy} to $\widetilde{\fM}_{a_p}\otimes_{\fS_F} \cO_F$.
\end{proof}  

We now address the question of when $\widetilde{\fM}_{a_p}$ from Theorem \ref{thm:descent2} is defined over $\fS_{\Lambda}$.   This is a delicate question and can depend on the choice of $a'$.  

\begin{prop} \label{prop:furtherdescent}
Assume $v_p(a_p) > \left\lfloor {h \over p} \right\rfloor$ and $h \geq 2p$.   Then, there exists a descent $\fM_{a_p}$ of $\widetilde{\cM}_{a_p}$ to $\fS_{\Lambda}$ such that the matrix of Frobenius is given by
\[
\begin{pmatrix} P & -1 \\ E^h & 0 \end{pmatrix}
\] 
where $P \in \fm_{F}[u]$ is a polynomial of degree at most $h$ and $P(0) = a_p$. Moreover, ${\mathfrak M}_{a_p}$ satisfies the monodromy condition, $D({\mathfrak M}_{a_p})$ is weakly-admissible, and $V_{\cris}^{\ast}(D({\mathfrak M}_{a_p})) = V_{h+1,a_p}$.
\end{prop}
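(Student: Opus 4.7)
The plan is to apply Theorem \ref{thm:descent2} with the specific choice $a' = (h-p+1)/2$, and then argue that the resulting polynomial $P$ must automatically have coefficients in $\fm_F$. This $a'$ is designed so that $h - 2a' = p - 1$, making $N = h$, and so that $a'$ is as small as possible subject to $N = h$. Verifying the hypotheses $a' \leq h/2$ and $pv_p(a_p) > a'$ is straightforward given $h \geq 2p$ and $v_p(a_p) > \lfloor h/p \rfloor$: indeed $pv_p(a_p) > p \lfloor h/p \rfloor \geq h - (p-1) \geq (h-p+1)/2 = a'$.

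Theorem \ref{thm:descent2} now delivers a descent $\widetilde{\fM}_{a_p}$ to $\fS_F$ with the stated Frobenius matrix, where $P \in F[u]$ has degree at most $h$, satisfies $P(0) = a_p$, and obeys the estimate \eqref{eqn:P0-truncation}. The main calculation is to show that this estimate is strong enough to force $P \in \fm_F[u]$. Writing $h = pm + r$ with $m = \lfloor h/p \rfloor \geq 2$ and $r \in [0,p-1]$, a short computation from $pv_p(a_p) > pm = h-r$ gives $pv_p(a_p) - a' > (h+p-1-2r)/2$, and the inequality $(h+p-1-2r)/2 \geq p - 1$ rearranges to $(m-1)p \geq r-1$, which is strict because $m \geq 2$ and $r \leq p-1$. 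Hence the minimum in \eqref{eqn:P0-truncation} equals $p - 1$, and the right side of \eqref{eqn:P0-truncation} is $pv_p(a_p) + (p-1) > (h-r) + (p-1) \geq h$, using $pv_p(a_p) > h - r$ strictly and $r \leq p - 1$. Lemma \ref{lem:integral} then forces $P - T_{\leq h}(a_p(\lambda_-/\lambda_{++})^h) \in \fm_F[u]$, and Lemma \ref{lem:L1} gives $T_{\leq h}(a_p(\lambda_-/\lambda_{++})^h) \in \fm_F[u]$, so $P \in \fm_F[u]$.

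With $P$ integral, the matrix $\begin{smallpmatrix} P & -1 \\ E^h & 0 \end{smallpmatrix}$ defines a Kisin module $\fM_{a_p} := \fS_\Lambda^{\oplus 2}$ over $\fS_\Lambda$ whose base change to $\fS_F$ is $\widetilde{\fM}_{a_p}$, so $\fM_{a_p}$ is a descent of $\widetilde{\cM}_{a_p}$ to $\fS_\Lambda$; the monodromy condition, weak-admissibility of $D(\fM_{a_p})$, and the identification $V_{\cris}^{\ast}(D(\fM_{a_p})) = V_{h+1,a_p}$ all transfer for free from Theorem \ref{thm:descent2}. The main obstacle is the choice of $a'$: the more obvious $a' = h/2$ (also giving $N = h$) yields a weaker estimate that fails in some instances of the range $h \geq 2p$ (for example $p=5$, $h=14$), so taking the smallest $a'$ compatible with $N = h$ is essential to fully exploit the integrality improvement built into $v_p(a_p) > \lfloor h/p \rfloor$.
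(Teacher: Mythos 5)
Your proposal is correct and follows essentially the same route as the paper: you choose $a' = (h-p+1)/2 = h/2 - (p-1)/2$, which is exactly the paper's choice, apply Theorem \ref{thm:descent2}, verify $pv_p(a_p) - a' > p-1$ so the minimum in \eqref{eqn:P0-truncation} is $p-1$, conclude $v_R(P - T_{\leq h}(\cdots)) > h$, and then invoke Lemmas \ref{lem:integral} and \ref{lem:L1}. The only differences are cosmetic (the paper verifies $pv_p(a_p) - a' > p$ directly via $a' \leq p\lfloor h/p\rfloor/2$, you go through $(h+p-1-2r)/2$), and your closing remark that $a' = h/2$ would not suffice is a correct and useful observation that the paper leaves implicit.
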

\begin{proof} 
Let $a' = {h \over 2} - {p-1 \over 2}$, and write $h = qp + \delta$ where $\delta \leq p-1$ and $q = \left\lfloor {h \over p} \right\rfloor$.  Then,
\[
a' =  {h \over 2} - {p-1 \over 2} \leq {pq \over 2}.
\]
Thus $pv_p(a_p) - a' > pq - a' \geq p$ since $q \geq 2$.

In particular, Theorem \ref{thm:descent2} applies with $a'$, and note we have shown $pv_p(a_p) - a' > p$. Thus, we conclude there is a matrix $\begin{smallpmatrix} P & -1 \\ E^h & 0 \end{smallpmatrix}$ for the Frobenius on $\widetilde{\mathcal M}_{a_p}$ where $P$ is a polynomial of degree $N = h$ and such that
\begin{equation}
v_{R}(P - T_{\leq h}(a_p(\lambda_-/\lambda_{++})^h) \geq pv_p(a_p) + p-1 > h.
\end{equation}
By Lemma \ref{lem:integral}, since $P$ has degree at most $h$, we have $P \in \mathfrak m_F[u]$ if and only if $T_{\leq h}(a_p(\lambda_-/\lambda_{++})^h) \in \mathfrak m_F[u]$. The latter is true by Lemma \ref{lem:L1}, so the proof is complete.
\end{proof}

\begin{cor} \label{cor:reduction}  If $v_p(a_p) > \left\lfloor {h \over p} \right\rfloor$, then $\overline{V}_{h+1, a_p} \cong \overline{V}_{h+1, 0}$.

 More precisely, let $\Q_{p^2}$ denote the quadratic unramified extension of $\Q_p$ and $\chi$ the quadratic unramified $\F$-valued character of $G_{\Q_{p^2}}$.  If $\omega_2$ is a niveau 2 fundamental inertial character of $G_{\Q_{p^2}}$, then 
\[
 \overline{V}_{h+1, a_p} \cong \Ind^{G_{\Qp}}_{G_{\Q_{p^2}}}(\omega_2^h \chi).
\] 
\end{cor}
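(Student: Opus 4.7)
The plan is to deduce the corollary from Proposition \ref{prop:furtherdescent}, the Kisin module / Galois representation dictionary of Corollary \ref{cor:kisin-mod-p}, and classical computations for $\overline{V}_{h+1,0}$.

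First suppose $h \geq 2p$, so that Proposition \ref{prop:furtherdescent} applies. It produces a Kisin module $\fM_{a_p}$ over $\fS_\Lambda$ with $V_{\cris}^{\ast}(D(\fM_{a_p})) \cong V_{h+1,a_p}$ and whose Frobenius matrix is $\begin{smallpmatrix} P & -1 \\ E^h & 0 \end{smallpmatrix}$ with $P \in \fm_F[u]$ and $P(0) = a_p$. Reducing modulo $\fm_F$, and using $E \equiv u$ and $P \equiv 0$, the Frobenius on $\fM_{a_p} \otimes_\Lambda \F$ is given by
\[
\begin{pmatrix} 0 & -1 \\ u^h & 0 \end{pmatrix},
\]
which is manifestly independent of $a_p$. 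Consequently the étale $\varphi$-module $\fM_{a_p} \otimes_\Lambda \F[u^{-1}]$, and hence the $G_\infty$-representation $V_{\F}^{\ast}(\fM_{a_p} \otimes_\Lambda \F[u^{-1}])$, does not depend on $a_p$. Applying Corollary \ref{cor:kisin-mod-p} both to $a_p$ and to $0$ gives
\[
\overline{V}_{h+1, a_p}|_{G_\infty} \cong \overline{V}_{h+1, 0}|_{G_\infty}
\]
as semisimple $G_\infty$-representations; the full faithfulness of restriction from semisimple $G_{\Q_p}$-representations to semisimple $G_\infty$-representations, noted in the proof of Corollary \ref{cor:kisin-mod-p}, then upgrades this to $\overline{V}_{h+1, a_p} \cong \overline{V}_{h+1, 0}$. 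The remaining range $h < 2p$ (where $\lfloor h/p \rfloor \leq 1$) falls within the classical small-weight computations cited in the introduction, which already establish $\overline{V}_{h+1,a_p} \cong \overline{V}_{h+1,0}$ in that range.

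For the explicit form, $\overline{V}_{h+1,0}$ is classically computed to be the irreducible niveau-$2$ induced representation $\Ind^{G_{\Q_p}}_{G_{\Q_{p^2}}}(\omega_2^h \chi)$. One can recover this shape directly from the mod $p$ Frobenius matrix above:\ base-changing to $\F_{p^2}(\!(u)\!)$, the matrix $\begin{smallpmatrix} 0 & -1 \\ u^h & 0 \end{smallpmatrix}$ becomes diagonalizable, with $\varphi^2$-eigenvalues $-u^h$ and $-u^{ph}$, and $\varphi$ cyclically permutes the two eigenlines. Fontaine's equivalence $V_{\F}^{\ast}$ then identifies the one-dimensional pieces with $\omega_2^h \chi$ and its $\Gal(\Q_{p^2}/\Q_p)$-conjugate, so the full $G_\infty$-representation is the claimed induction. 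I expect the main obstacle to lie precisely here, in bookkeeping:\ tracking the sign $-1$ through the covariant/contravariant conventions of $V_{\cris}^{\ast}$ and $V_{\F}^{\ast}$ and the chosen normalization of $\omega_2$ to pin down the unramified quadratic character $\chi$, rather than some other quadratic twist, as forced by the classical formula.
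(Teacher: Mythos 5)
Your overall strategy matches the paper's: for $h \geq 2p$, invoke Proposition \ref{prop:furtherdescent} to produce an integral Kisin module whose reduction mod $\fm_F$ has Frobenius matrix $\begin{smallpmatrix} 0 & -1 \\ u^h & 0 \end{smallpmatrix}$, observe this is independent of $a_p$, and conclude via Corollary \ref{cor:kisin-mod-p}; then identify $\overline{V}_{h+1,0}$ by a classical computation (the paper cites Breuil's Proposition 3.2). Your sketch of the direct $\varphi^2$-diagonalization over $\F_{p^2}(\!(u)\!)$ is fine in outline.

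The genuine gap is in your handling of $h < 2p$. You appeal to ``classical small-weight computations cited in the introduction,'' i.e.\ Fontaine--Edixhoven and Berger--Li--Zhu, but these do not cover $h = 2p - 1$ (equivalently $k = 2p$). There $\lfloor h/p\rfloor = 1$ while $\lfloor (k-2)/(p-1)\rfloor = 2$, so BLZ only gives the conclusion for $v_p(a_p) > 2$, strictly less than what the corollary asserts. In fact $k = 2p$ is the first weight at which the corollary improves on BLZ, and it is also excluded from Proposition \ref{prop:furtherdescent} (which requires $h \geq 2p$). The paper disposes of $h < 2p$ by citing a mod $p$ compatibility theorem of Berger (Th\'eor\`eme 3.2.1 of the reference the paper labels \emph{Berger-Modp-compat}), not Fontaine or BLZ. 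You need that reference, or else a patching of the slope-by-slope computations (Buzzard--Gee, Bhattacharya--Ghate, etc.) covering all $v_p(a_p) > 1$ at weight $k = 2p$, to close your argument.
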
 
\begin{proof}
We may suppose $h \geq 2p$ by \cite[Th\'eor\`eme 3.2.1]{Berger-Modp-compat}. Let $\fM_{a_p}$ be the Kisin module as in Proposition \ref{prop:furtherdescent}.  By Corollary \ref{cor:kisin-mod-p}, $\overline{V}_{h+1,a_p}$ is determined by the $\varphi$-module $\fM_{a_p} \otimes_{\Lambda} \F$. Since the reduction $\fM_{a_p} \otimes_{\Lambda} \F$ has Frobenius given by $\begin{smallpmatrix} 0 & -1 \\ u^h & 0 \end{smallpmatrix}$, which does not depend on $a_p$ subject to  $v_p(a_p) > \left \lfloor {h \over p}  \right \rfloor$, we have $\overline{V}_{h+1, a_p} \cong  \overline{V}_{h+1, 0}$. An explicit description of $V_{h+1,0}$ (and thus $\overline{V}_{h+1,0}$) is given in \cite[Proposition 3.2]{Breuil-SomeRepresentations2}.
\end{proof}

\bibliography{reductions_bibliography}
\bibliographystyle{abbrv}

\end{document}